\documentclass[11pt]{article}

\usepackage[a4paper,margin=1in]{geometry}
\usepackage{fontspec}
\usepackage{polyglossia}
\setmainlanguage{english}
\setotherlanguage{russian}
\defaultfontfeatures{Ligatures=TeX}
\newfontfamily\russianfont{Tempora-Regular.otf}[
  BoldFont=Tempora-Bold.otf,
  ItalicFont=Tempora-Italic.otf,
  BoldItalicFont=Tempora-BoldItalic.otf
]
\newfontfamily\cyrillicfont{Tempora-Regular.otf}[
  BoldFont=Tempora-Bold.otf,
  ItalicFont=Tempora-Italic.otf,
  BoldItalicFont=Tempora-BoldItalic.otf
]
\newfontfamily\cyrillicfonttt{FreeMono.otf}[
  BoldFont=FreeMonoBold.otf,
  ItalicFont=FreeMonoOblique.otf,
  BoldItalicFont=FreeMonoBoldOblique.otf
]
\usepackage{amsmath,amssymb,amsthm,mathtools,bm}
\usepackage{booktabs,array,enumitem,microtype,needspace,graphicx}
\usepackage{xcolor}
\usepackage{titlesec}
\usepackage{tikz}
\usepackage[numbers,sort&compress]{natbib}
\usepackage{xurl}
\usepackage[hypertexnames=false]{hyperref}
\definecolor{ink}{HTML}{000000}
\definecolor{softgray}{HTML}{F3F4F6}
\definecolor{rulegray}{HTML}{9CA3AF}

\hypersetup{
  colorlinks=true,
  allcolors=black,
  pdftitle={A Sharp Joint Bias-Energy Envelope for Radial Clipping},
  pdfauthor={Danila Litvinov},
  pdfsubject={Complete English and Russian versions; English first},
  pdfkeywords={radial clipping, heavy tails, sharp inequality, bias, variance, support function}
}

\titleformat{\section}
  {\normalfont\bfseries\color{ink}\fontsize{16pt}{19pt}\selectfont}
  {\thesection}{0.75em}{}
\titleformat{\subsection}
  {\normalfont\bfseries\color{ink}\fontsize{12.5pt}{15pt}\selectfont}
  {\thesubsection}{0.65em}{}
\titlespacing*{\section}{0pt}{3.5ex plus 1ex minus .2ex}{1.4ex}
\titlespacing*{\subsection}{0pt}{2.4ex plus .7ex minus .2ex}{.8ex}

\newtheorem{theoremen}{Theorem}[section]
\newtheorem{corollaryen}[theoremen]{Corollary}

\theoremstyle{remark}

\theoremstyle{plain}
\newtheorem{theoremru}{Теорема}[section]
\newtheorem{corollaryru}[theoremru]{Следствие}

\theoremstyle{remark}

\newcommand{\E}{\mathbb{E}}
\newcommand{\G}{\mathcal{G}}
\newcommand{\norm}[1]{\left\lVert #1\right\rVert}
\newcommand{\ip}[2]{\left\langle #1,#2\right\rangle}
\newcommand{\R}{\mathbb{R}}
\newcommand{\Bias}{\mathsf{B}}
\newcommand{\Energy}{\mathsf{V}}

\setlist[itemize]{leftmargin=1.5em,itemsep=.35em,topsep=.45em}
\allowdisplaybreaks[2]

\newcommand{\makearticletitle}[2]{%
  \newpage\null\vskip 2em%
  \begin{center}%
    {\LARGE #1\par}%
    \vskip 1.5em%
    {\large #2\par}%
  \end{center}%
  \par\vskip 1.5em%
}
\newcommand{\makeenglishtitle}{%
  \makearticletitle{\vspace{-1.2cm}\textbf{A Sharp Joint Bias--Energy Envelope\\[2mm]
  for Radial Clipping}}{Danila Litvinov}%
}
\newcommand{\makerussiantitle}{%
  \makearticletitle{\vspace{-1.2cm}\textbf{Точная совместная огибающая смещения\\[2mm]
  и энергии радиального клиппинга}}{Данила Литвинов}%
}

\begin{document}
\selectlanguage{english}
\color{ink}
\makeenglishtitle
\vspace{-1.2em}

\begin{abstract}
Radial clipping does two things at once: it removes the part of a vector
outside a ball and retains a bounded vector inside the ball.  The removed
part produces bias, while the squared norm of the retained part produces
energy.  We determine the exact joint price of these effects when only a
\(p\)-moment, \(1<p\le2\), is available.  The problem turns out to be
one-dimensional: everything is decided by the position of one point along a
ray.  At \(\alpha=p\beta\), the extremal configuration changes.  On one side
a worst point can be chosen on the clipping sphere; on the other the unique
positive extremal radius moves outside.  When \(p=2\), all inner radii tie in
the first regime.
The resulting constant is optimal both in the deterministic inequality and
in the corresponding Hilbert-space stochastic problem.  In the first regime,
a symmetric law attains the bound.  In the second, the supremum is not
attained, but a two-point family with an increasingly rare outlier approaches
it.  We also show how the same transition determines the sharp bias--energy
frontier.  We conclude with statistical consequences and an application to
online learning.
\end{abstract}

\medskip
\noindent\textbf{Keywords.} Radial clipping; heavy tails; sharp inequalities;
bias; variance; support functions.

\noindent\textbf{2020 Mathematics Subject Classification.}
60E15 (Primary); 46N30 (Secondary).

\section{Introduction}

Imagine a vector \(x\) moving away from the origin along a fixed ray.  As
long as \(\norm{x}\le\tau\), clipping does nothing: the residual is zero,
while the energy grows with \(\norm{x}^2\).  Once the point crosses the sphere
of radius \(\tau\), the picture changes.  The clipped vector stops at the
sphere, so its energy no longer grows, and every further displacement becomes
the residual \(x-C_\tau(x)\).  This elementary picture drives the whole
paper.

The same construction appears throughout heavy-tailed analysis.  Radial
clipping turns an unbounded stochastic update into a bounded one, but it
creates two costs.  The removed part gives a bias of order
\(\tau^{1-p}\E\norm{X}^p\); the retained part gives a variance, or energy, of
order \(\tau^{2-p}\E\norm{X}^p\).  These costs are usually bounded
separately.  That is enough to recover convergence rates, but it hides the
exact exchange rate between bias and energy.

We therefore ask the following local question:
\begin{quote}
\emph{What is the smallest one-moment charge that pays for an arbitrary
nonnegative linear combination of the clipping residual and clipped
quadratic energy?}
\end{quote}
The answer depends on the relative prices \(\alpha\) and \(\beta\).  When
energy is sufficiently expensive, the worst point remains on the clipping
sphere.  When residual becomes more expensive, the extremal point moves
outside and stops at an explicit radius \(r_*\tau\).  The equality
\(\alpha=p\beta\) separates these two pictures.

Our argument follows this intuition.  We first reduce the problem to motion
along one ray and find the exact deterministic constant.  We then pass to
random vectors and identify the laws behind the two regimes.  Next, we
draw the result as the support boundary of the possible bias--energy pairs.
Finally, we use the same envelope in a time-uniform mean bound, an anchored
minibatch estimate, and an expected-regret bound for quadratic
follow-the-regularized-leader (FTRL).

Modern dimension-free threshold estimators based on norm truncation include
the construction of \citet{catoni2018dimension}; nonasymptotic \(p\)-moment
truncation bounds in smooth Banach spaces are developed by
\citet{whitehouse2026banach}.  Bias and convergence phenomena for gradient
clipping are studied, among others, by
\citet{koloskova2023revisiting,nguyen2023improved}, while
\citet{he2025tradeoff,liu2026refined} give recent bias--variance and refined
clipping-error analyses.  Exact distribution-free moment problems have a
much broader history; see \citet{bertsimas2005optimal} and the explicit
two-point reductions of \citet{kleer2024operators}.  We compare the precise
claims in Section~\ref{sec:scope}.  No worldwide-priority claim is made.

\section{The exact deterministic envelope}

Let \(E\) be a nonzero real normed space.  For \(\tau>0\), define
\[
C_\tau(x)=
\begin{cases}
x,&\norm{x}\le\tau,\\
\tau x/\norm{x},&\norm{x}>\tau.
\end{cases}
\]
For \(1<p\le2\), put
\[
c_p=\frac{(p-1)^{p-1}}{p^p}.
\]

Before stating the theorem, observe that \(C_\tau\) preserves the direction
of \(x\).  Thus neither the dimension of the space nor the shape of its unit
sphere enters the optimization.  The only relevant variable is the
dimensionless radius
\[
r=\frac{\norm{x}}{\tau}.
\]
For \(r\le1\), only the energy cost is present.  For \(r>1\), a linearly
growing residual appears.  The two-parameter theorem below is therefore the
maximization of one scalar function.

\begin{theoremen}[Sharp joint envelope]\label{thm:deterministic}
For \(\alpha,\beta\ge0\), define
\begin{equation}\label{eq:constant}
K_p(\alpha,\beta)=
\begin{cases}
\beta,&\alpha\le p\beta,\\[2mm]
c_p\dfrac{\alpha^p}{(\alpha-\beta)^{p-1}},
&\alpha>p\beta.
\end{cases}
\end{equation}
Then, for every \(x\in E\),
\begin{equation}\label{eq:det-envelope}
\boxed{
\alpha\norm{x-C_\tau(x)}
+\frac{\beta}{\tau}\norm{C_\tau(x)}^2
\le
K_p(\alpha,\beta)\tau^{1-p}\norm{x}^p.}
\end{equation}
The constant in \eqref{eq:constant} is the smallest possible uniform
constant in every nonzero normed space.
\end{theoremen}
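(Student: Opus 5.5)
The plan is to reduce \eqref{eq:det-envelope} to a one-variable maximization, compute its supremum exactly, and then read off sharpness from the maximizer. Since \(C_\tau\) preserves direction, for \(x\neq0\) I would write \(\norm{x}=r\tau\) with \(r>0\). Then \(\norm{C_\tau(x)}=\tau\min(r,1)\) and \(\norm{x-C_\tau(x)}=\tau(r-1)_+\). Dividing \eqref{eq:det-envelope} by \(\tau\), it becomes \(f(r)\le K_p(\alpha,\beta)\) for all \(r>0\), where
\[
f(r)=\begin{cases}\beta r^{2-p},& 0<r\le1,\\[1mm] \dfrac{\alpha(r-1)+\beta}{r^{p}},& r>1.\end{cases}
\]
The case \(x=0\) is trivial. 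So the whole theorem, including optimality, amounts to the identity \(\sup_{r>0}f(r)=K_p(\alpha,\beta)\) together with the fact that the supremum is attained. Attainment gives sharpness, because in a nonzero normed space there is a unit vector \(e\), and \(x=r\tau e\) realizes every radius \(r\).

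On \((0,1]\), \(f\) is nondecreasing because \(2-p\ge0\), so its maximum there is \(f(1)=\beta\). On \((1,\infty)\), I would compute
\[
f'(r)=r^{-p-1}\bigl[\alpha r-p(\alpha(r-1)+\beta)\bigr]=r^{-p-1}\bigl[p(\alpha-\beta)-(p-1)\alpha r\bigr].
\]
The bracket is affine in \(r\) with nonpositive slope \(-(p-1)\alpha\), and its value at \(r=1\) is \(\alpha-p\beta\). This is where the regime split appears.

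If \(\alpha\le p\beta\), the bracket is \(\le0\) on \((1,\infty)\), so \(f\) is nonincreasing there. Hence \(\sup f=f(1)=\beta\), attained on the sphere \(\norm{x}=\tau\).

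If \(\alpha>p\beta\), then \(\alpha>\beta\ge0\) and \(\alpha>0\). The bracket changes sign exactly once, at
\[
r_*=\frac{p(\alpha-\beta)}{(p-1)\alpha}>1,
\]
so \(f\) increases on \((1,r_*]\) and decreases afterwards. Using \(\alpha r_*-(\alpha-\beta)=(\alpha-\beta)/(p-1)\), I get
\[
f(r_*)=\frac{\alpha-\beta}{p-1}\cdot\frac{(p-1)^p\alpha^p}{p^p(\alpha-\beta)^p}=c_p\frac{\alpha^p}{(\alpha-\beta)^{p-1}}.
\]
This value dominates \(f(1)=\beta\), since \(f\) increases on \([1,r_*]\), and it also dominates the values on \((0,1]\). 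So \(\sup f=K_p(\alpha,\beta)\) as defined in \eqref{eq:constant}, attained at \(\norm{x}=r_*\tau\).

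The only delicate points are bookkeeping rather than real obstacles. The first is the boundary case \(\alpha=p\beta\), where \(r_*=1\) and the two formulas should agree; indeed \(c_p(p\beta)^p/((p-1)\beta)^{p-1}=\beta\), which confirms continuity. The second is degenerate parameters such as \(\alpha=\beta=0\), where \(K_p=0\) and the statement is trivial. Sharpness in every nonzero normed space then follows because the extremal radius (\(1\) or \(r_*\)) is realized by \(x=r\tau e\): for any constant \(K<K_p(\alpha,\beta)\), this \(x\) violates \eqref{eq:det-envelope} with \(K\) in place of \(K_p(\alpha,\beta)\).
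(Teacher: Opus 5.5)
Your proposal is correct and follows essentially the same route as the paper: reduce to the scalar ratio in \(r=\norm{x}/\tau\), bound it by \(\beta r^{2-p}\le\beta\) inside the ball, analyze the sign of \(p(\alpha-\beta)-(p-1)\alpha r\) outside, and evaluate at \(r_*\). You also realize the extremal radius explicitly via \(x=r\tau e\) for a unit vector \(e\), which makes the sharpness claim in every nonzero normed space a bit more transparent than the paper's brief remark that the smallest constant has been found.
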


\begin{proof}
\emph{Step 1: reduction to a ray.}
The case \(x=0\) is immediate.  Suppose \(x\ne0\), set
\(r=\norm{x}/\tau\), and divide \eqref{eq:det-envelope} by \(\tau\).
The optimal-constant problem becomes
\begin{equation}\label{eq:scalar-ratio}
\sup_{r>0}F(r),\qquad
F(r)=
\frac{\alpha(r-1)_++\beta\min\{r,1\}^2}{r^p}.
\end{equation}
If \((\alpha,\beta)=(0,0)\), then \(F\equiv0\); hence assume that at least
one weight is positive.

\emph{Step 2: the point lies inside the ball.}
For \(0<r\le1\), there is no residual, and
\[
F(r)=\beta r^{2-p}\le\beta.
\]
\emph{Step 3: the point has crossed the boundary.}
For \(r>1\), the clipped vector already lies on the sphere and the residual
grows linearly.  Hence
\[
F(r)=\frac{\alpha(r-1)+\beta}{r^p},
\]
and the sign of \(F'(r)\) is the sign of
\begin{equation}\label{eq:derivative-sign}
p(\alpha-\beta)-(p-1)\alpha r.
\end{equation}
If \(\alpha\le p\beta\), the derivative is already nonpositive when the
point leaves the ball, at \(r=1\).  Expression
\eqref{eq:derivative-sign} is strictly decreasing when \(\alpha>0\), while
for \(\alpha=0\) it is the negative constant \(-p\beta\).  Moving outside
the sphere is therefore unprofitable, and the global maximum is \(\beta\).

If \(\alpha>p\beta\), the function initially keeps increasing after the
boundary.  It stops at the unique critical point
\begin{equation}\label{eq:r-star}
r_*=\frac{p(\alpha-\beta)}{\alpha(p-1)}>1.
\end{equation}
Substitution gives
\[
F(r_*)=
\frac{(p-1)^{p-1}}{p^p}
\frac{\alpha^p}{(\alpha-\beta)^{p-1}}.
\]
\emph{Step 4: comparison of the two pictures.}
This value is exactly the second branch of \eqref{eq:constant}.  We have
therefore proved the inequality and found its smallest possible constant.
At \(\alpha=p\beta>0\), the extremal point returns to the boundary
\(r_*=1\), so the two branches agree.  At the origin, the first branch is
zero and the second formula has continuous extension zero.
\end{proof}

The phase transition now has a simple meaning.  In the first regime, the
worst observation reaches the clipping sphere and stops.  In the second,
residual is expensive enough that the worst observation moves outside; the
radius \(r_*\tau\) says exactly how far it travels.

\begin{corollaryen}[One-parameter and quadratic forms]\label{cor:one-param}
With \(\alpha=1\) and \(\beta=\lambda\ge0\), the exact constant is
\[
\kappa_p(\lambda)=
\begin{cases}
c_p(1-\lambda)^{1-p},&0\le\lambda\le1/p,\\
\lambda,&\lambda\ge1/p.
\end{cases}
\]
\par\noindent In particular,
\[
\kappa_2(\lambda)=
\begin{cases}
\dfrac{1}{4(1-\lambda)},&0\le\lambda\le1/2,\\[2mm]
\lambda,&\lambda\ge1/2.
\end{cases}
\]
\end{corollaryen}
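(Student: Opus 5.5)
This is a direct specialization of Theorem~\ref{thm:deterministic}, so I would just substitute \(\alpha=1\), \(\beta=\lambda\ge0\) into \eqref{eq:constant} and rewrite the two branches. No new optimization is needed, because the theorem already identifies \(K_p(\alpha,\beta)\) as the smallest uniform constant in \eqref{eq:det-envelope}. The only work is matching the case split and simplifying the formula.

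First I would translate the case condition. The first branch of \eqref{eq:constant} applies when \(\alpha\le p\beta\), which here reads \(1\le p\lambda\), that is, \(\lambda\ge1/p\); it gives \(\kappa_p(\lambda)=\beta=\lambda\). The second branch applies when \(\alpha>p\beta\), that is, \(0\le\lambda<1/p\). There the constant is
\[
c_p\frac{1^p}{(1-\lambda)^{p-1}}=c_p(1-\lambda)^{1-p}.
\]
Since \(1/p<1\), we have \(1-\lambda>0\) throughout this range, so the expression is well defined.

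Next I would reconcile the endpoint. The corollary places \(\lambda=1/p\) in the first formula's interval as well, so I must check that both formulas agree there. This is the continuity remark in Step~4 of the proof of Theorem~\ref{thm:deterministic}, but a direct check is easy. At \(\lambda=1/p\),
\[
c_p\Bigl(\tfrac{p-1}{p}\Bigr)^{1-p}=\frac{(p-1)^{p-1}}{p^p}\cdot\frac{p^{p-1}}{(p-1)^{p-1}}=\frac1p=\lambda.
\]
So writing the closed interval \(0\le\lambda\le1/p\) in the first case is harmless.

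Finally, for the quadratic form I would set \(p=2\). Then \(c_2=1/4\), so \(\kappa_2(\lambda)=\tfrac{1}{4(1-\lambda)}\) for \(\lambda\le1/2\), and \(\kappa_2(\lambda)=\lambda\) for \(\lambda\ge1/2\). The two formulas meet at the value \(1/2\) at \(\lambda=1/2\). There is no real obstacle in any of this; the only point needing care is the boundary bookkeeping at \(\lambda=1/p\), and the identity displayed above settles it.
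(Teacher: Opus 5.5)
Your proposal is correct and matches the paper, which presents this corollary as an immediate specialization of Theorem~\ref{thm:deterministic} with \((\alpha,\beta)=(1,\lambda)\) and gives no separate proof. Rewriting \(\alpha\le p\beta\) as \(\lambda\ge1/p\), simplifying the second branch to \(c_p(1-\lambda)^{1-p}\), and checking that both formulas equal \(1/p\) at \(\lambda=1/p\) (the continuity remark in Step~4) is all that is required.
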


If \((\alpha,\beta)=(0,0)\), every \(r>0\) is maximizing.  For nonzero
weights in the first regime, the only positive maximizing radius is \(r=1\)
when \(1<p<2\), while exactly the radii \(0<r\le1\) are maximizing when
\(p=2\).  In the second regime, \(r_*\) is the unique positive maximizing
radius.

\section{The sharp stochastic frontier}

Replace the single point by a random cloud.  Clipping moves its center, and
\(\norm{\E C_\tau(X)-\E X}\) measures that bias.  At the same time the cloud
is compressed into the ball, and
\(\E\norm{C_\tau(X)-\E C_\tau(X)}^2\) measures the energy left around its new
center.  The deterministic theorem applies pointwise, but sharpness now
depends on how the points of the cloud balance one another.

Throughout this section, \(L^p(\Omega;H)\) denotes the Bochner space, and all
\(H\)-valued expectations and conditional expectations are Bochner
expectations.

\begin{theoremen}[Stochastic envelope and exact centered constant]
\label{thm:stochastic}
Let \(H\) be a nonzero real Hilbert space, fix \(1<p\le2\) and \(\tau>0\),
let \(X\in L^p(\Omega;H)\), and write
\[
Y=C_\tau(X),\qquad m=\E X,\qquad b=\E Y.
\]
Then, for every deterministic \(\alpha,\beta\ge0\),
\begin{equation}\label{eq:stochastic-upper}
\boxed{
\alpha\norm{b-m}
+\frac{\beta}{\tau}\E\norm{Y-b}^2
\le
K_p(\alpha,\beta)\tau^{1-p}\E\norm{X}^p.}
\end{equation}
Moreover, if the supremum is taken over all admissible strongly measurable
\(H\)-valued random variables defined on arbitrary probability spaces
(equivalently, over their Borel laws concentrated on separable subspaces),
the constant remains exact:
\begin{equation}\label{eq:stochastic-sup}
\boxed{
\sup_{\substack{\E X=0\\0<\E\norm{X}^p<\infty}}
\frac{
\alpha\norm{\E C_\tau(X)}
+\beta\tau^{-1}
\E\norm{C_\tau(X)-\E C_\tau(X)}^2
}{
\tau^{1-p}\E\norm{X}^p
}
=K_p(\alpha,\beta).}
\end{equation}
Finite two-point probability spaces suffice for the sharpness constructions;
exactness is not claimed on one arbitrary fixed probability space.
If \(\alpha>p\beta\), the supremum in
\eqref{eq:stochastic-sup} is not attained by a centered law with positive
\(p\)-moment.
\end{theoremen}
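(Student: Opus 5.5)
The plan is to obtain \eqref{eq:stochastic-upper} by reducing both stochastic costs to pointwise quantities and then invoking Theorem~\ref{thm:deterministic} inside the expectation. For the bias, linearity of the Bochner expectation gives \(b-m=\E(Y-X)\), so \(\norm{b-m}\le\E\norm{X-C_\tau(X)}\). For the energy, the Hilbert structure gives \(\E\norm{Y-b}^2=\E\norm{Y}^2-\norm{b}^2\le\E\norm{Y}^2\). Since \(\norm{Y}\le\tau\), the variable \(Y\) is bounded, so every quantity here is finite. Adding the two estimates with weights \(\alpha\) and \(\beta/\tau\) shows that the left side of \eqref{eq:stochastic-upper} is at most
\[
\E\Bigl[\alpha\norm{X-C_\tau(X)}+\tfrac{\beta}{\tau}\norm{C_\tau(X)}^2\Bigr].
\]
Applying \eqref{eq:det-envelope} pointwise and taking expectations then gives the bound. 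Restricting to \(\E X=0\) makes the left side of \eqref{eq:stochastic-upper} equal to the numerator in \eqref{eq:stochastic-sup}. Hence the supremum is at most \(K_p(\alpha,\beta)\).

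For the matching lower bound, fix a unit vector \(u\in H\); all constructions live on the line \(\R u\). The case \((\alpha,\beta)=(0,0)\) is trivial, since both sides vanish for any centered law with positive \(p\)-moment.

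In the regime \(\alpha\le p\beta\), take \(X=\pm\tau u\), each with probability \(1/2\). Then \(Y=X\) and \(b=0\), while \(\E\norm{Y-b}^2=\tau^2\) and \(\E\norm{X}^p=\tau^p\). The ratio is exactly \(\beta=K_p\), so the supremum is attained by a symmetric law.

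In the regime \(\alpha>p\beta\), I will use a two-point family with a rare outlier at the extremal radius \(r_*\) of \eqref{eq:r-star}. Set \(X=r_*\tau u\) with probability \(q\), and \(X=-a u\) with probability \(1-q\), where \(a=qr_*\tau/(1-q)\). This choice makes \(\E X=0\), and for small \(q\) the inner point satisfies \(a<\tau\). The three terms are then as follows.
\begin{itemize}
\item \(\E C_\tau(X)=-q(r_*-1)\tau u\), so the bias equals \(q(r_*-1)\tau\).
\item The energy is \(q(1-q)(\tau+a)^2=q\tau^2(1+o(1))\).
\item The moment is \(\E\norm{X}^p=q(r_*\tau)^p+(1-q)a^p=q(r_*\tau)^p(1+o(1))\), because \(a^p=O(q^p)=o(q)\) when \(p>1\).
\end{itemize}
Consequently the ratio tends to \((\alpha(r_*-1)+\beta)/r_*^p=F(r_*)=K_p(\alpha,\beta)\) as \(q\to0\). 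The exactness claim follows, and only two-point probability spaces are used.

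The remaining point, which I expect to be the main obstacle, is non-attainment when \(\alpha>p\beta\). Suppose a centered \(X\) with \(\E\norm{X}^p>0\) attained equality. Then every inequality in the chain above must be an equality.

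First, the pointwise bound \eqref{eq:det-envelope} must be tight almost surely. Since \(r_*\) is the unique positive maximizer of \(F\), this forces \(\norm{X}\in\{0,r_*\tau\}\) a.s. On the event \(\{X\ne0\}\) this gives \(C_\tau(X)=X/r_*\), and the identity holds trivially on \(\{X=0\}\). Therefore \(b=\E X/r_*=0\), and also \(\E(X-Y)=(1-1/r_*)\E X=0\), so the bias term \(\alpha\norm{b-m}\) vanishes.

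Second, equality in the bias step requires \(\norm{\E(X-Y)}=\E\norm{X-Y}\), and here the left side is \(0\). Because \(\alpha>p\beta\ge0\), this weight is active. Equality would thus need \(\E\norm{X-Y}=(1-1/r_*)\E\norm{X}=0\). This contradicts \(\E\norm{X}^p>0\) together with \(r_*>1\).

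The care here is in justifying that strict optimality of \(r_*\) forces the a.s. radius constraint. This holds because \(F(r)<F(r_*)\) for \(r\ne r_*\), and the deficit integrates against the positive measure \(\norm{X}^p\,dP\).
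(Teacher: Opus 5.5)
Your proof is correct and follows the paper's argument step for step. You use the same Jensen-plus-variance-identity reduction to the pointwise envelope, the symmetric law $\pm\tau u$ in the first regime, and the same rare-outlier two-point family; your energy $q(1-q)(\tau+a)^2$ and moment correction $(q/(1-q))^{p-1}$ reproduce the paper's $R_q$. Non-attainment rests on the same mechanism: almost-sure pointwise tightness forces $\norm{X}\in\{0,r_*\tau\}$. The only cosmetic difference is the final contradiction. You use equality in the Jensen step, $\norm{\E(X-Y)}=0<\E\norm{X-Y}$, whereas the paper bounds the objective by $\beta/r_*^p<K_p(\alpha,\beta)$; the two are equivalent.
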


\begin{proof}
\emph{Upper bound.}
Jensen's inequality says that the displacement of the center is no larger
than the mean displacement of individual points.  The Hilbert variance
identity says that centering can only decrease quadratic energy.  Thus
\[
\norm{b-m}=\norm{\E(Y-X)}
\le\E\norm{Y-X},
\]
\[
\E\norm{Y-b}^2
=\E\norm{Y}^2-\norm{b}^2
\le\E\norm{Y}^2.
\]
Applying Theorem~\ref{thm:deterministic} pointwise and taking expectations
proves \eqref{eq:stochastic-upper}.

\emph{First regime: symmetry on the sphere.}
Suppose \(\alpha\le p\beta\).  For a unit vector \(e\), place equal mass at
\(\tau e\) and \(-\tau e\).  The cloud is symmetric, clipping leaves it
unchanged, and the whole price is energy.  The ratio in
\eqref{eq:stochastic-sup} is therefore
\(\beta=K_p(\alpha,\beta)\), so the upper bound is attained.

\emph{Second regime: a rare outlier.}
Now suppose that \(\alpha>p\beta\), and let \(r=r_*\) from
\eqref{eq:r-star}.  We would like to put all mass at the extremal radius
\(r\tau\), but such a law cannot be both centered before clipping and biased
after clipping.  We instead take one rare large outlier and balance it by a
frequent small point.  For \(0<q<(1+r)^{-1}\), define
\[
X_q=
\begin{cases}
r\tau e,&\text{with probability }q,\\[1mm]
-\dfrac{qr\tau}{1-q}e,&\text{with probability }1-q.
\end{cases}
\]
The positive atom is clipped, while the negative atom stays inside the ball.
Before clipping the two atoms balance exactly; afterwards their center
moves.  A direct calculation gives
\begin{equation}\label{eq:Rq}
R_q=
\frac{
\alpha(r-1)
+\beta\dfrac{(1+q(r-1))^2}{1-q}
}{
r^p\left[
1+\left(\dfrac{q}{1-q}\right)^{p-1}
\right]
}.
\end{equation}
Since \(p>1\), the \(p\)-moment cost of the balancing atom disappears as
\(q\downarrow0\), and \(R_q\to F(r_*)=K_p(\alpha,\beta)\).  This proves
exactness in the second regime.

\emph{Why the limit is not attained.}
Finally, suppose that a centered law with positive \(p\)-moment attained
equality in the second regime.  In particular, the gap between the
pointwise right- and left-hand sides of Theorem~\ref{thm:deterministic} is a
nonnegative integrable random variable.  Equality of expectations forces
this gap to vanish almost surely; hence
every nonzero observation would have norm \(r_*\tau\).  On that sphere,
\(C_\tau(X)=X/r_*\), and centering gives \(\E C_\tau(X)=0\).  Its normalized
objective is therefore at most \(\beta/r_*^p\), whereas
\[
K_p(\alpha,\beta)
=\frac{\alpha(r_*-1)+\beta}{r_*^p}
>\frac{\beta}{r_*^p}.
\]
Possible mass at zero does not change the contradiction.  The rare-outlier
family therefore provides an explicit sharpness mechanism, but no centered
law with positive \(p\)-moment reaches the limit.
\end{proof}

\begin{corollaryen}[Conditional form]\label{cor:conditional}
Let \(H\) be a nonzero real Hilbert space, \(1<p\le2\), and
\(X\in L^p(\Omega;H)\).  Let \(\G\) be a sub-sigma-field, let
\(\alpha,\beta\ge0\) be deterministic, and let
\(\tau:\Omega\to(0,\infty)\) be finite almost surely and
\(\G\)-measurable.  If
\[
m=\E[X\mid\G],\quad Y=C_\tau(X),\quad b=\E[Y\mid\G],
\]
then, almost surely,
\[
\alpha\norm{b-m}
+\frac{\beta}{\tau}\E[\norm{Y-b}^2\mid\G]
\le
K_p(\alpha,\beta)\tau^{1-p}
\E[\norm{X}^p\mid\G].
\]
\end{corollaryen}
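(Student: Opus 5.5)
The plan is to reproduce the upper-bound argument of Theorem~\ref{thm:stochastic}, with conditional expectations in place of expectations and the deterministic envelope of Theorem~\ref{thm:deterministic} applied pointwise at the random radius \(\tau(\omega)\). Since \(\tau\) is \(\G\)-measurable, it behaves like a constant under \(\E[\cdot\mid\G]\). The one structural fact to verify first is integrability. Because \(\norm{Y}\le\norm{X}\), we have \(Y\in L^p(\Omega;H)\), so \(b\) and \(m\) are well defined as Bochner conditional expectations. Moreover \(\norm{Y-X}\le\norm{X}\), so the residual is integrable. The quadratic terms may fail to be integrable unconditionally, because \(\tau\) is unbounded. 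I will therefore work with conditional expectations of nonnegative, possibly non-integrable, random variables, which always exist in \([0,\infty]\), and read the inequality in \([0,\infty]\). Alternatively, one can localize on the \(\G\)-sets \(\{\tau\le n\}\), on which \(\norm{Y}\le n\), and let \(n\to\infty\).

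\emph{Bias term.} Since \(b-m=\E[Y-X\mid\G]\), conditional Jensen for the norm (a convex, lower semicontinuous function on \(H\)) gives \(\norm{b-m}\le\E[\norm{Y-X}\mid\G]\) almost surely.

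\emph{Energy term.} On \(A_n=\{\tau\le n\}\in\G\) the variable \(Y\mathbf 1_{A_n}\) is bounded. The conditional Hilbert variance identity \(\E[\norm{Y-b}^2\mid\G]=\E[\norm{Y}^2\mid\G]-\norm{b}^2\) then holds on \(A_n\). This follows by expanding \(\norm{Y-b}^2=\norm{Y}^2-2\ip{Y}{b}+\norm{b}^2\) and pulling the \(\G\)-measurable \(b\) out of the inner product, which uses \(\E[\ip{Y}{b}\mid\G]=\ip{b}{b}\), valid for a \(\G\)-measurable bounded \(b\). Hence \(\E[\norm{Y-b}^2\mid\G]\le\E[\norm{Y}^2\mid\G]\) on each \(A_n\), and therefore almost surely, since \(\bigcup_nA_n\) has full measure because \(\tau\) is finite.

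\emph{Combination.} Multiply by the \(\G\)-measurable factors \(\alpha\) and \(\beta/\tau\), and pull \(1/\tau\) and \(\tau^{1-p}\) inside the conditional expectations. Then
\[
\alpha\norm{b-m}+\frac{\beta}{\tau}\E[\norm{Y-b}^2\mid\G]
\le\E\Bigl[\alpha\norm{X-C_\tau(X)}+\frac{\beta}{\tau}\norm{C_\tau(X)}^2\Bigm|\G\Bigr].
\]
Theorem~\ref{thm:deterministic}, applied for each \(\omega\) with \(\tau(\omega)\), bounds the integrand by \(K_p(\alpha,\beta)\tau^{1-p}\norm{X}^p\). Monotonicity of conditional expectation, together with the fact that \(\tau^{1-p}\) is \(\G\)-measurable, gives the claim.

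The main obstacle is measure-theoretic rather than analytic. Pulling out the unbounded \(\G\)-measurable factors \(1/\tau\) and \(\tau^{1-p}\), and applying the conditional variance identity without an a priori \(L^2\) bound, both need justification. I will handle both by localizing to \(A_n\cap\{\tau\ge 1/n\}\) and using the standard rule \(\E[ZW\mid\G]=Z\,\E[W\mid\G]\) for nonnegative \(W\) and \(\G\)-measurable \(Z\ge0\), which holds in the extended sense. I also need that \(C_\tau(X)\) is strongly measurable, which holds because \((x,t)\mapsto C_t(x)\) is continuous.
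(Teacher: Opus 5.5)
Your proposal is correct and follows essentially the paper's argument: conditional Jensen for the bias, the conditional Hilbert variance identity for the energy, extended-sense conditional expectations, and a pointwise application of Theorem~\ref{thm:deterministic} at the \(\G\)-measurable threshold with localization over \(\G\)-sets. The only difference is the localizing sequence: you use \(\{\tau\le n\}\), on which \(\norm{\mathbf 1_{A_n}Y}\le n\) directly, whereas the paper uses \(\{S\le n\}\) with \(S=\E[\norm{Y}^2\mid\G]\); your choice spares the preliminary bound \(S\le\tau^{2-p}\E[\norm{X}^p\mid\G]<\infty\).
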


\begin{proof}
All scalar conditional expectations of nonnegative random variables below
are initially understood in the extended sense.
Conditional Jensen and the conditional Hilbert variance identity give
\[
\norm{b-m}\le \E[\norm{Y-X}\mid\G],\qquad
\E[\norm{Y-b}^2\mid\G]
=\E[\norm{Y}^2\mid\G]-\norm{b}^2
\le \E[\norm{Y}^2\mid\G].
\]
Put
\[
S=\E[\norm{Y}^2\mid\G]
\le\tau^{2-p}\E[\norm{X}^p\mid\G]<\infty
\quad\text{almost surely}.
\]
Thus all terms in the preceding display are finite almost surely.
Apply
Theorem~\ref{thm:deterministic} pointwise with the random threshold and
take conditional expectations; all powers of \(\tau\) are
\(\G\)-measurable.  Global square integrability is unnecessary: on
\(A_n=\{S\le n\}\in\G\), the vector \(\mathbf 1_{A_n}Y\) lies in \(L^2\) and
has conditional mean \(\mathbf 1_{A_n}b\).  The usual conditional variance
identity on \(A_n\), followed by \(A_n\uparrow\Omega\), proves the displayed
identity, where the exhaustion is understood almost surely.
\end{proof}

\section{Convex geometry and interpretation}
\label{sec:geometry}

So far \(\alpha\) and \(\beta\) have appeared as external prices.  We now
give them a geometric meaning.  Associate with every centered admissible law
satisfying \(0<\E\norm{X}^p<\infty\) a point in the plane: normalized energy
on the horizontal axis and normalized bias on the vertical axis,
\[
\Bias(X)=
\frac{\tau^{p-1}\norm{\E C_\tau(X)}}{\E\norm{X}^p},
\qquad
\Energy(X)=
\frac{\tau^{p-2}\E\norm{C_\tau(X)-\E C_\tau(X)}^2}
{\E\norm{X}^p}.
\]
Theorem~\ref{thm:stochastic} says exactly that, for
\(\alpha,\beta\ge0\),
\[
\sup_X\{\alpha\Bias(X)+\beta\Energy(X)\}
=K_p(\alpha,\beta).
\]
For \((\alpha,\beta)\ne(0,0)\), the expression on the left is a linear
functional with normal \((\beta,\alpha)\).  Thus \(K_p\) tells us how far its
supporting line can be moved before it touches the closure of the attainable
bias--energy pairs.
Equivalently, \(K_p\) is the exact support function of that set in
nonnegative directions.

The rare two-point family makes this boundary visible.  If its large atom
has radius \(r\tau\), its limiting point is
\[
\Energy=r^{-p},\qquad
\Bias=(r-1)r^{-p},\qquad r\ge1.
\]
Eliminating \(r\) gives the concave curve
\begin{equation}\label{eq:pareto-curve}
\Bias=\Energy^{(p-1)/p}-\Energy,\qquad 0<\Energy\le1.
\end{equation}
The Pareto-optimal arc exposed by nonnegative support directions corresponds to
\(1\le r\le p/(p-1)\); points with larger \(r\) are dominated.  In the
closure of the attainable set, this exposed arc contains at least one
maximizing point for every nonzero nonnegative support direction.  In the
zero direction the support value is \(0\), and every attainable pair
maximizes.  We do not claim that the rare-outlier curve equals the entire,
generally nonconvex, feasible set.

\begin{figure}[t]
\centering
\begin{tikzpicture}[x=6.3cm,y=12cm]
  \draw[->] (0,0) -- (1.08,0) node[right] {\(\Energy\)};
  \draw[->] (0,0) -- (0,0.30) node[above] {\(\Bias\)};
  \draw[thin,dashed,rulegray,domain=0.001:0.25,samples=60,smooth]
    plot (\x,{sqrt(\x)-\x});
  \draw[thick,domain=0.25:1,samples=90,smooth]
    plot (\x,{sqrt(\x)-\x});
  \fill (1,0) circle (0.8pt) node[below left] {\((1,0)\)};
  \fill (0.25,0.25) circle (0.8pt)
    node[above right] {\((1/4,1/4)\)};
  \draw[dashed,rulegray] (0.25,0) -- (0.25,0.25);
  \draw[dashed,rulegray] (0,0.25) -- (0.25,0.25);
\end{tikzpicture}
\caption{For \(p=2\), the rare-outlier curve is
\(\Bias=\sqrt{\Energy}-\Energy\).  Its thick arc is exposed by nonnegative
support directions;
the dashed continuation is dominated.  The points \((1,0)\) and
\((1/4,1/4)\) support the energy-dominated and pure-residual directions.}
\label{fig:p2-frontier}
\end{figure}

Three endpoints summarize the main special cases of the local envelope:
\[
\begin{array}{ccl}
(\alpha,\beta)=(1,0)
&:&
\norm{x-C_\tau(x)}
\le c_p\tau^{1-p}\norm{x}^p,\\[1mm]
(\alpha,\beta)=(0,1)
&:&
\norm{C_\tau(x)}^2
\le\tau^{2-p}\norm{x}^p,\\[1mm]
(\alpha,\beta)=(1,1)
&:&
\norm{x-C_\tau(x)}
+\tau^{-1}\norm{C_\tau(x)}^2
\le\tau^{1-p}\norm{x}^p.
\end{array}
\]
In online portfolio language, the residual measures the exposure removed by
clipping a heavy-tailed loss gradient, while the quadratic term is the local
curvature charge paid by a mirror-descent update.  The weights
\(\alpha,\beta\) specify their relative prices.  This is an interpretation of
the envelope, not a claim of a new investment-performance theorem.

\section{Applications of the joint envelope}
\label{sec:applications}

The geometric prices \(\alpha,\beta\) become concrete whenever clipping bias
and retained energy enter a proof with fixed relative weights.

\subsection{Time-uniform Hilbert mean estimation}

\citet{whitehouse2024meanv2} study estimation of a shared mean for heavy-tailed
Banach-space observations under martingale dependence.  Their construction
uses an initial pilot estimator to center later observations before radial
truncation.  This permits a conditional central \(p\)-moment assumption, while
the martingale argument gives dimension-free, time-uniform line-crossing
bounds.  In a Hilbert space, the proof pays simultaneously for clipping bias
and the predictable quadratic energy of the bounded increments.  These are
precisely the two prices in our envelope.

More precisely, for \(\rho>0\), write
\[
h(\rho)=\frac{e^{2\rho}-2\rho-1}{(2\rho)^2}.
\]

\begin{theoremen}[A refined time-uniform clipped-mean bound]
\label{thm:app-whitehouse}
Let \(H\) be a separable real Hilbert space, let \(1<p\le2\), let
\((\mathcal F_i)_{i\ge0}\) be a filtration, and let \((X_i)_{i\ge1}\) be
adapted to it.  Suppose that, for some deterministic \(\mu\in H\) and
\(v\in[0,\infty)\), almost surely,
\[
\E[X_i\mid\mathcal F_{i-1}]=\mu,
\qquad
\E[\norm{X_i-\mu}^p\mid\mathcal F_{i-1}]\le v
\quad (i\ge1).
\]
Fix \(k\ge1\), \(\lambda,\rho>0\), and
\(\delta_1,\delta_2\in(0,1)\) with \(\delta_1+\delta_2<1\).  Suppose that
the \(\mathcal F_k\)-measurable pilot estimator \(\widehat Z_k\) satisfies
\[
\mathbb P\!\left(
  \norm{\widehat Z_k-\mu}\ge r(\delta_2,k)
\right)\le\delta_2
\]
for a deterministic radius \(r(\delta_2,k)\).  For \(n>k\), define
\[
\widehat\mu_n
=\widehat Z_k+
\frac1{n-k}\sum_{i=k+1}^n
C_{1/\lambda}(X_i-\widehat Z_k).
\]
Then, on an event of probability at least
\(1-\delta_1-\delta_2\), simultaneously for every \(n>k\),
\begin{equation}\label{eq:app-whitehouse-bound}
\norm{\widehat\mu_n-\mu}
\le
2^{p-1}\kappa_p\!\bigl(\rho h(\rho)\bigr)
\lambda^{p-1}
\bigl(v+r(\delta_2,k)^p\bigr)
+\frac{\log(2/\delta_1)}{\rho\lambda(n-k)}.
\end{equation}
\end{theoremen}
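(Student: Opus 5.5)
The plan is to split the error of \(\widehat\mu_n\) into a predictable clipping-bias part and a bounded Hilbert martingale. Then control the martingale with a Pinelis-type exponential supermartingale and Ville's inequality, and pay for bias and predictable energy together through Corollary~\ref{cor:conditional}. Throughout, put \(\tau=1/\lambda\), \(N=n-k\), and for \(i>k\)
\[
Y_i=C_{1/\lambda}(X_i-\widehat Z_k),\quad
m_i=\E[X_i-\widehat Z_k\mid\mathcal F_{i-1}]=\mu-\widehat Z_k,\quad
b_i=\E[Y_i\mid\mathcal F_{i-1}],\quad
D_i=Y_i-b_i .
\]
The identity for \(m_i\) holds because \(\widehat Z_k\) is \(\mathcal F_k\subseteq\mathcal F_{i-1}\)-measurable. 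Then
\[
\widehat\mu_n-\mu=\frac1N\sum_{i=k+1}^n(b_i-m_i)+\frac1N S_n,
\qquad S_n=\sum_{i=k+1}^n D_i .
\]
Here \((S_n)_{n>k}\) is an \(H\)-valued martingale with \(\norm{D_i}\le 2/\lambda\), because \(\norm{Y_i}\le1/\lambda\) and hence \(\norm{b_i}\le1/\lambda\).

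\emph{Step 1 (martingale part).} For bounded martingale differences in a separable Hilbert space I would use Pinelis' argument, which exploits the \((2,1)\)-smoothness of \(H\). Set \(t=\rho\lambda\) and \(c=2/\lambda\). The claim to establish is that
\[
M_n=\cosh(t\norm{S_n})\exp\Bigl(-\frac{e^{tc}-tc-1}{c^2}\sum_{i\le n}\E[\norm{D_i}^2\mid\mathcal F_{i-1}]\Bigr)
\]
is a nonnegative supermartingale with \(M_k=1\). Since \(tc=2\rho\), the exponent coefficient equals \(t^2h(\rho)=(\rho\lambda)^2h(\rho)\). Combining Ville's maximal inequality with \(\cosh u\ge e^{u}/2\) gives, with probability at least \(1-\delta_1\) and simultaneously for all \(n>k\),
\[
\norm{S_n}\le \rho\lambda h(\rho)\sum_{i=k+1}^n\E[\norm{D_i}^2\mid\mathcal F_{i-1}]+\frac{\log(2/\delta_1)}{\rho\lambda}.
\]
This step is the main obstacle. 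The key one-step estimate \(\E[\cosh(t\norm{S_{i-1}+D_i})\mid\mathcal F_{i-1}]\le\cosh(t\norm{S_{i-1}})\exp(t^2h(\rho)\E[\norm{D_i}^2\mid\mathcal F_{i-1}])\) must be obtained with exactly this constant. I would first try to cite Pinelis (1994, Theorem~3.2) in the form used by \citet{whitehouse2024meanv2}. Failing that, I would reprove it by a second-order Taylor bound of \(s\mapsto\cosh(t\norm{u+sD})\) in \(s\), using \(\norm{D}\le c\) together with the monotonicity of \((e^{x}-x-1)/x^2\).

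\emph{Step 2 (joint envelope).} Dividing by \(N\) and using the triangle inequality gives
\[
\norm{\widehat\mu_n-\mu}\le\frac1N\sum_{i=k+1}^n\Bigl(\norm{b_i-m_i}+\rho h(\rho)\,\lambda\,\E[\norm{Y_i-b_i}^2\mid\mathcal F_{i-1}]\Bigr)+\frac{\log(2/\delta_1)}{\rho\lambda N}.
\]
Each summand is exactly the left side of Corollary~\ref{cor:conditional} applied to \(X_i-\widehat Z_k\in L^p\), with \(\G=\mathcal F_{i-1}\), threshold \(\tau=1/\lambda\), and weights \(\alpha=1\), \(\beta=\rho h(\rho)\). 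By Corollary~\ref{cor:one-param} its constant is \(\kappa_p(\rho h(\rho))\). The summand is therefore at most \(\kappa_p(\rho h(\rho))\lambda^{p-1}\E[\norm{X_i-\widehat Z_k}^p\mid\mathcal F_{i-1}]\).

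\emph{Step 3 (pilot and union bound).} Convexity gives \(\norm{X_i-\widehat Z_k}^p\le2^{p-1}(\norm{X_i-\mu}^p+\norm{\widehat Z_k-\mu}^p)\). Taking conditional expectations, and noting that \(\widehat Z_k\) is \(\mathcal F_{i-1}\)-measurable, the bound becomes at most \(2^{p-1}(v+\norm{\widehat Z_k-\mu}^p)\). On the pilot event \(\{\norm{\widehat Z_k-\mu}<r(\delta_2,k)\}\) this is at most \(2^{p-1}(v+r(\delta_2,k)^p)\) for every \(i\) at once. Averaging over \(i\) and intersecting with the Ville event, a union bound gives probability at least \(1-\delta_1-\delta_2\) that \eqref{eq:app-whitehouse-bound} holds for all \(n>k\) simultaneously. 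The remaining technical checks are routine: integrability of \(X_i-\widehat Z_k\), which is not needed globally since the corollary works conditionally, and the fact that the Ville event does not depend on \(n\).
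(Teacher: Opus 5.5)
Your proposal is correct and follows the paper's proof. The shared steps are the decomposition into predictable bias plus a bounded martingale, the Pinelis \(\cosh\)-supermartingale with the conditional variance kept and Ville's inequality, Corollary~\ref{cor:conditional} with threshold \(1/\lambda\) and weights \((1,\rho h(\rho))\), the \(2^{p-1}\) pilot bound, and the union bound; the paper takes the supermartingale step from the argument behind Whitehouse et al.'s Proposition~3.3, whereas you spell out why the coefficient is exactly \(t^2h(\rho)\). The one inaccuracy is your claim that \(X_i-\widehat Z_k\in L^p\), since nothing is assumed about the integrability of \(\widehat Z_k\): apply the corollary to \(\mathbf 1_{A_m}(X_i-\widehat Z_k)\) with \(A_m=\{\norm{\widehat Z_k-\mu}\le m\}\in\mathcal F_{i-1}\) and let \(m\to\infty\), which is exactly the localization the paper performs.
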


\begin{proof}
For \(i>k\), condition on \(\mathcal F_{i-1}\) and put
\[
U_i=X_i-\widehat Z_k,\qquad T_i=C_{1/\lambda}(U_i).
\]
Put \(m_i=\mu-\widehat Z_k\) and define
\[
B_i=\norm{\E[T_i\mid\mathcal F_{i-1}]
          -m_i},
\quad
V_i=\E\!\left[
\norm{T_i-\E[T_i\mid\mathcal F_{i-1}]}^2
\mathrel{\Big|}\mathcal F_{i-1}\right].
\]
The predictable one-step charge in the Hilbert line-crossing argument is
\[
\rho\lambda B_i+\rho^2h(\rho)\lambda^2V_i
=\rho\lambda\bigl(B_i+\rho h(\rho)\lambda V_i\bigr).
\]
No moment assumption on the pilot estimator is needed.  Indeed, on the
\(\mathcal F_k\)-measurable sets
\(A_m=\{\norm{\widehat Z_k-\mu}\le m\}\), the vector
\(\mathbf 1_{A_m}U_i\) belongs to \(L^p\).  Since
\(A_m\in\mathcal F_{i-1}\),
\[
C_{1/\lambda}(\mathbf 1_{A_m}U_i)=\mathbf 1_{A_m}T_i,
\qquad
\E[\mathbf 1_{A_m}U_i\mid\mathcal F_{i-1}]
=\mathbf 1_{A_m}m_i,
\qquad
\E[\mathbf 1_{A_m}T_i\mid\mathcal F_{i-1}]
=\mathbf 1_{A_m}\E[T_i\mid\mathcal F_{i-1}].
\]
The localized bias and centered energy are therefore
\(\mathbf 1_{A_m}B_i\) and \(\mathbf 1_{A_m}V_i\).  Apply
Corollary~\ref{cor:conditional} and use \(A_m\uparrow\Omega\).  With threshold
\(1/\lambda\) and weights \((1,\rho h(\rho))\), this gives, almost surely,
\[
B_i+\rho h(\rho)\lambda V_i
\le
\kappa_p\!\bigl(\rho h(\rho)\bigr)\lambda^{p-1}
\E[\norm{U_i}^p\mid\mathcal F_{i-1}].
\]
The central-moment premise implies
\[
\E[\norm{U_i}^p\mid\mathcal F_{i-1}]
\le2^{p-1}\bigl(v+\norm{\widehat Z_k-\mu}^p\bigr).
\]
Consequently,
\[
\rho\lambda B_i+\rho^2h(\rho)\lambda^2V_i
\le \rho\,2^{p-1}\kappa_p\!\bigl(\rho h(\rho)\bigr)\lambda^p
\bigl(v+\norm{\widehat Z_k-\mu}^p\bigr).
\]
For completeness, keep the conditional variance term in the Hilbert-space
argument underlying Proposition~3.3 of \citet{whitehouse2024meanv2}.  With the
shifted filtration \(\mathcal G_j=\mathcal F_{k+j}\), the resulting
nonnegative supermartingale and Ville's inequality give, with failure
probability at most \(\delta_1\), simultaneously for every \(n>k\),
\[
\norm{\widehat\mu_n-\mu}
\le
\frac1{n-k}\sum_{i=k+1}^n
\bigl(B_i+\rho h(\rho)\lambda V_i\bigr)
+\frac{\log(2/\delta_1)}{\rho\lambda(n-k)}.
\]
Apply the preceding one-step bound.  On the pilot-estimator accuracy event,
\(\norm{\widehat Z_k-\mu}\) is at most \(r(\delta_2,k)\); the union bound with
the failure of that event proves \eqref{eq:app-whitehouse-bound}.
\end{proof}

The earlier bound can be written in the same notation.  Specialize
Proposition~3.5 of \citet{whitehouse2024meanv2} to a Hilbert space, shift the
indices by \(k\), identify its truncation operator with \(C_{1/\lambda}\), and
leave \(\rho\) free.  Their one-variable truncation constant is
\[
\frac1p\left(\frac{p-1}{p}\right)^{p-1}=c_p.
\]
Thus their separate bias and energy estimates give, with probability at least
\(1-\delta_1-\delta_2\), simultaneously for every \(n>k\),
\begin{equation}\label{eq:app-whitehouse-separate}
\norm{\widehat\mu_n-\mu}
\le
2^{p-1}\{c_p+\rho h(\rho)\}
\lambda^{p-1}
\bigl(v+r(\delta_2,k)^p\bigr)
+\frac{\log(2/\delta_1)}{\rho\lambda(n-k)}.
\end{equation}
Lemma~3.2 supplies the separate bias estimate, Proposition~3.3 the
martingale-energy step, and Proposition~3.5 the final line-crossing statement.
Thus Theorem~\ref{thm:app-whitehouse} makes the single replacement
\[
c_p+\rho h(\rho)
\quad\longmapsto\quad
\kappa_p\!\bigl(\rho h(\rho)\bigr).
\]
The right-hand quantity is strictly smaller for every \(\rho>0\).  The factor
\(2^{p-1}\) remains: it comes from passing from the central moment about
\(\mu\) to the moment about the pilot estimator.  The estimator,
conditional moment premise, martingale dependence, rate, and dimension-free
Hilbert scope are unchanged.

Both bounds have the form \(A\lambda^{p-1}+B/\lambda\), with the same \(B\).
Optimizing \(\lambda\) at a fixed \(n\) makes the resulting radius
proportional to \(A^{1/p}\).  At \(\rho=1\),
\(h(1)=(e^2-3)/4\) and \(\kappa_p(h(1))=h(1)\), so the ratio of the refined
radius to the radius from \eqref{eq:app-whitehouse-separate} is
\[
\left(\frac{h(1)}{h(1)+c_p}\right)^{1/p},
\]
which equals \(0.902463\ldots\) at \(p=2\), a reduction of about \(9.75\%\).
This calculation uses the formulas from the version-2 preprint
\citep{whitehouse2024meanv2}; no formula-level comparison with the later
journal version \citep{whitehouse2026banach} is asserted.

The same vector event also controls expected returns.  For \(c\ge1\),
consider the gross-exposure class of
\citet{fan2012vast},
\[
\mathcal W_c=
\{w\in\R^d:\mathbf 1^Tw=1,\ \norm{w}_1\le c\}.
\]

\begin{corollaryen}[Gross-exposure projection]
\label{cor:app-gross-exposure}
In the setting of Theorem~\ref{thm:app-whitehouse} with \(H=\R^d\), let
\(\mathfrak r_n\) denote the right-hand side of
\eqref{eq:app-whitehouse-bound}.  On the same event, simultaneously for
every \(n>k\),
\[
\sup_{w\in\mathcal W_c}
\left|w^T(\widehat\mu_n-\mu)\right|
\le c\norm{\widehat\mu_n-\mu}_2
\le c\mathfrak r_n.
\]
In particular, the conclusion covers any data-dependent choice
\(\widehat w_n\in\mathcal W_c\) without a union bound over assets or
portfolios.
\end{corollaryen}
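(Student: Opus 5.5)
The plan is to reduce everything to a Hölder-type duality between the \(\ell_1\) and \(\ell_\infty\) norms and then invoke Theorem~\ref{thm:app-whitehouse} unchanged. No new probabilistic event has to be built. Let \(\Omega_0\) denote the event of probability at least \(1-\delta_1-\delta_2\) produced by Theorem~\ref{thm:app-whitehouse} with \(H=\R^d\) and the Euclidean norm. On \(\Omega_0\) we have \(\norm{\widehat\mu_n-\mu}_2\le\mathfrak r_n\) simultaneously for every \(n>k\). The second inequality in the corollary is therefore exactly that conclusion. The work lies in the first inequality, which is deterministic: it holds for every realisation, and in particular on \(\Omega_0\).

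For the deterministic step, fix \(u=\widehat\mu_n-\mu\in\R^d\) and \(w\in\mathcal W_c\). By Hölder's inequality,
\[
\left|w^Tu\right|\le\norm{w}_1\norm{u}_\infty\le c\,\norm{u}_\infty\le c\,\norm{u}_2,
\]
using \(\norm{w}_1\le c\) and \(\norm{u}_\infty\le\norm{u}_2\). The budget constraint \(\mathbf 1^Tw=1\) is not needed for the bound; it only makes \(\mathcal W_c\) the portfolio class. Since the bound does not depend on \(w\), taking the supremum over \(w\in\mathcal W_c\) gives the first inequality.

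For the data-dependent choice, note that \(\widehat w_n(\omega)\in\mathcal W_c\) pointwise. Hence \(|\widehat w_n^T(\widehat\mu_n-\mu)|\) is bounded by the supremum on \(\Omega_0\), with no further probability cost. This is why no union bound over assets or portfolios is needed.

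I expect no genuine obstacle. The only point needing care is measurability of the supremum. I would avoid it by stating the conclusion on the event \(\Omega_0\) itself: the inequality is deterministic there, so we never need the probability of \(\{\sup_w\cdots\le c\mathfrak r_n\}\) as a separate event. Alternatively, \(\mathcal W_c\) is compact and \(w\mapsto w^Tu\) is continuous, so the supremum equals a supremum over a countable dense subset and is therefore measurable.
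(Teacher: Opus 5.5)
Your proposal is correct and matches the paper's proof: the deterministic bound \(|w^Tz|\le\norm{w}_1\norm{z}_\infty\le c\norm{z}_2\) for \(w\in\mathcal W_c\), applied with \(z=\widehat\mu_n-\mu\) on the event of Theorem~\ref{thm:app-whitehouse}. Your remarks that the budget constraint \(\mathbf 1^Tw=1\) is unused and on the measurability of the supremum are accurate, though not needed for the argument.
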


\begin{proof}
For every \(w\in\mathcal W_c\),
\[
|w^Tz|\le\norm{w}_1\norm{z}_\infty
\le c\norm{z}_2.
\]
Apply this deterministic inequality with
\(z=\widehat\mu_n-\mu\) on the event of
Theorem~\ref{thm:app-whitehouse}.
\end{proof}

This projection controls simultaneous error in expected returns only; it
does not provide a covariance or portfolio-performance guarantee.

\subsection{Anchored minibatch clipping}

\begin{corollaryen}[Exact anchored minibatch envelope]
\label{cor:app-minibatch}
Let \(H\) be a nonzero real Hilbert space, let \(1<p\le2\), let \(n\ge1\), and let
\(X_1,\ldots,X_n\) be independent copies of an \(H\)-valued random vector
with mean \(\mu\).  Fix a deterministic anchor \(a\in H\), \(\tau>0\),
and assume
\[
M_a=\E\norm{X_1-a}^p<\infty.
\]
Define
\[
\widehat\mu_{a,\tau}
=a+\frac1n\sum_{i=1}^n C_\tau(X_i-a).
\]
Then, for every \(\alpha,\beta\ge0\),
\begin{equation}\label{eq:app-minibatch}
\alpha\norm{\E\widehat\mu_{a,\tau}-\mu}
+\frac\beta\tau
\E\norm{\widehat\mu_{a,\tau}
         -\E\widehat\mu_{a,\tau}}^2
\le
K_p\!\left(\alpha,\frac\beta n\right)
\tau^{1-p}M_a.
\end{equation}
The constant is the smallest possible uniform constant over this class, and
its phase boundary is \(\alpha=p\beta/n\).
\end{corollaryen}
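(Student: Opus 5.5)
The plan is to reduce the minibatch statement to Theorem~\ref{thm:stochastic} applied to the single shifted variable \(X=X_1-a\).

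\emph{Upper bound.} Put \(Y_i=C_\tau(X_i-a)\), \(b=\E Y_1\) and \(m=\mu-a=\E(X_1-a)\). Then
\[
\E\widehat\mu_{a,\tau}-\mu=b-m.
\]
The \(Y_i\) are independent, identically distributed and bounded. Hence the Hilbert variance of their average is
\[
\E\norm{\widehat\mu_{a,\tau}-\E\widehat\mu_{a,\tau}}^2
=\frac1n\E\norm{Y_1-b}^2 .
\]
This follows by expanding the square: cross terms vanish by independence and centering, and the Bochner expectation of an inner product of independent centered bounded vectors is zero. The left side of \eqref{eq:app-minibatch} therefore equals
\[
\alpha\norm{b-m}+\frac{\beta/n}{\tau}\E\norm{Y_1-b}^2 .
\]
Applying \eqref{eq:stochastic-upper} with weights \((\alpha,\beta/n)\) and \(X=X_1-a\), whose \(p\)-moment is \(M_a\), gives the bound with \(K_p(\alpha,\beta/n)\). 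The phase boundary \(\alpha=p\beta/n\) is then read off directly from \eqref{eq:constant}.

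\emph{Sharpness.} The class is all laws of \(X_1\) with a given anchor \(a\). The sample-size-one identity above holds for every law, so the ratio for the minibatch problem with \((\alpha,\beta)\) coincides with the single-variable ratio with weights \((\alpha,\beta/n)\). Take \(a=0\) and let \(X_1\) range over the centered laws used in \eqref{eq:stochastic-sup}. Then \(\mu=0\), \(M_a=\E\norm{X_1}^p\), and the ratio is exactly the expression inside the supremum of \eqref{eq:stochastic-sup}. That supremum equals \(K_p(\alpha,\beta/n)\): the symmetric law on the sphere attains it when \(\alpha\le p\beta/n\), and the rare-outlier family \(X_q\) approaches it otherwise. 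Nothing forces the anchor to be the mean, but exhibiting one anchor (or translating any law and its anchor together) suffices, since the constant is uniform over the class. Independent copies can be realised on a product of finite probability spaces.

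\emph{Main obstacle.} The only delicate step is the variance identity for the average. It requires the cross terms \(\E\ip{Y_i-b}{Y_j-b}\) to vanish for \(i\ne j\) in a possibly nonseparable Hilbert space. I would handle this by noting that the law of \(Y_1\) is concentrated on a separable subspace. Then expand in an orthonormal basis of that subspace, or use that \(\E[\ip{Y_i-b}{Y_j-b}\mid Y_j]=\ip{\E Y_i-b}{Y_j-b}=0\). Boundedness of \(Y_i\) makes all the terms integrable.
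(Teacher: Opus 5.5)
Your proposal is correct and follows essentially the same route as the paper. You reduce to \(Z_1=X_1-a\) via \(\norm{\E\widehat\mu_{a,\tau}-\mu}=\norm{b-m}\) and \(\E\norm{\widehat\mu_{a,\tau}-\E\widehat\mu_{a,\tau}}^2=\E\norm{Y_1-b}^2/n\), apply Theorem~\ref{thm:stochastic} with weights \((\alpha,\beta/n)\), and obtain sharpness by taking the anchor equal to the mean (your \(a=\mu=0\)) with i.i.d.\ copies of the symmetric law or the rare-outlier family \(X_q\); your justification of the vanishing cross terms just spells out the one-line step the paper asserts.
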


\begin{proof}
Put \(Z_i=X_i-a\), \(Y_i=C_\tau(Z_i)\),
\(m=\mu-a\), \(b=\E Y_i\), and
\(V=\E\norm{Y_i-b}^2\).  Then
\[
\norm{\E\widehat\mu_{a,\tau}-\mu}=\norm{b-m},
\qquad
\E\norm{\widehat\mu_{a,\tau}
         -\E\widehat\mu_{a,\tau}}^2=\frac Vn.
\]
The second identity follows because the centered summands are independent
and their cross terms vanish in a Hilbert space.  Applying
Theorem~\ref{thm:stochastic} to \(Z_1\) with weights
\((\alpha,\beta/n)\) proves \eqref{eq:app-minibatch}.  For sharpness, set
\(a=\mu\) and take i.i.d. copies of the one-dimensional centered laws used
for Theorem~\ref{thm:stochastic}; in the second regime, use its approximating
sequence.  Averaging leaves the clipping bias unchanged and divides the
centered energy by \(n\).
\end{proof}

If \(a\) is measurable with respect to a sigma-field \(\mathcal A\) and,
conditionally on \(\mathcal A\), the batch is i.i.d. with conditional mean
\(\mu\), put \(M_a=\E[\norm{X_1-a}^p\mid\mathcal A]\).  Provided
\(M_a<\infty\) almost surely, the same result holds conditionally, and the
conditional bias--variance charge is at most
\(K_p(\alpha,\beta/n)\tau^{1-p}M_a\).  If also \(\E M_a<\infty\), its
expectation is at most \(K_p(\alpha,\beta/n)\tau^{1-p}\E M_a\).  Constructing
\(a\) from the same batch does not justify the variance identity without a
separate dependence argument.

When \(H=\R^d\), there is a precise overlap with the clipping lemmas of
\citet{sadiev2023highprobability,gorbunov2024highprobability}.  Write
\(\sigma^p=\E\norm{X_1-\mu}^p>0\).  Applying those lemmas to
\(Z=X_1-a\) makes the estimator identical and changes their margin to
\(\norm{\mu-a}\le\tau/2\).  On this shared set of assumptions, their
separately assembled estimate bounds the left-hand side of
\eqref{eq:app-minibatch} by
\[
\left(2^p\alpha+\frac{18\beta}{n}\right)
\sigma^p\tau^{1-p}.
\]
For \(\alpha+\beta>0\), the joint bound is strictly smaller whenever
\[
\frac{M_a}{\sigma^p}
<
\frac{2^p\alpha+18\beta/n}
     {K_p(\alpha,\beta/n)}.
\]
The choice \(a=\mu\) is an oracle specialization.  It makes the two sides
directly comparable.  At \(p=2\), \(n=4\), \(\alpha=\beta=1\), the joint bound
has coefficient \(1/3\), the two sharp endpoint bounds sum to \(1/2\), and the
bounds in Lemma~B.3 of \citet{gorbunov2024highprobability} sum to \(8.5\).

To apply the preceding conditional argument directly, a practical pilot
estimator must be independent or conditionally independent of the minibatch;
otherwise a separate dependence argument is required.  Its cost remains the
moment \(M_a\).  For the comparison above, it must also satisfy the displayed
margin.  The full high-probability theorems for clipped stochastic gradient
descent (SGD) contain additional steps and are not strengthened by this local
calculation.

\subsection{Expected clipped quadratic FTRL}

\begin{corollaryen}[Expected bounded-domain clipped-FTRL bound]
\label{cor:app-ftrl}
Let \(H\) be a real Hilbert space and let \(W\subset H\) be nonempty, closed,
and convex, with diameter \(D<\infty\).  Fix \(1<p\le2\), let
\((\mathcal F_t)_{t\ge0}\) be a filtration, and let \((g_t)_{t\ge1}\) be
adapted to it with \(g_t\in L^p(\Omega;H)\).  For \(\eta,\tau>0\), put
\[
Y_t=C_\tau(g_t),
\qquad
M_t=\E[\norm{g_t}^p\mid\mathcal F_{t-1}]<\infty,
\]
and define \(w_t\) as the unique quadratic-FTRL minimizer
\[
w_t\in\mathop{\arg\min}_{w\in W}
\left\{
  \sum_{s<t}\ip{Y_s}{w}+\frac{\norm{w}^2}{2\eta}
\right\}.
\]
For \(T\ge1\) and \(u\in W\), let
\(R_T(u)=\sum_{t=1}^T\ip{g_t}{w_t-u}\).  Then
\begin{equation}\label{eq:app-ftrl}
\E R_T(u)
\le
\frac{\norm{u}^2}{2\eta}
+\sum_{t=1}^T
\E\!\left[
K_p\!\left(D,\frac{\eta\tau}{2}\right)
\tau^{1-p}M_t
\right].
\end{equation}
\end{corollaryen}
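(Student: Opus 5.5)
The plan is to split each regret increment into the clipped linear loss, handled by a deterministic quadratic-FTRL bound, and a clipping residual charged to the diameter. The two per-step costs then combine into exactly the left-hand side of Theorem~\ref{thm:deterministic} with weights \((\alpha,\beta)=(D,\eta\tau/2)\). Concretely, for every \(t\),
\[
\ip{g_t}{w_t-u}=\ip{Y_t}{w_t-u}+\ip{g_t-Y_t}{w_t-u}
\le \ip{Y_t}{w_t-u}+D\norm{g_t-C_\tau(g_t)},
\]
by Cauchy--Schwarz and \(w_t,u\in W\). Summing, it suffices to prove pathwise the FTRL bound
\begin{equation*}
\sum_{t=1}^T\ip{Y_t}{w_t-u}\le\frac{\norm{u}^2}{2\eta}+\frac\eta2\sum_{t=1}^T\norm{Y_t}^2 .
\end{equation*}
Given this, the per-step charge is \(D\norm{g_t-C_\tau(g_t)}+\frac{\eta\tau/2}{\tau}\norm{C_\tau(g_t)}^2\). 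By Theorem~\ref{thm:deterministic} this is at most \(K_p(D,\eta\tau/2)\tau^{1-p}\norm{g_t}^p\). Taking expectations via the tower property, with conditional expectation given \(\mathcal F_{t-1}\), replaces \(\norm{g_t}^p\) by \(M_t\) and yields \eqref{eq:app-ftrl}. Note that this argument never needs the conditional mean of \(g_t\); the pointwise residual bound is enough.

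For the FTRL bound, write \(\Phi_t(w)=\sum_{s<t}\ip{Y_s}{w}+\norm{w}^2/(2\eta)\). Since \(\Phi_t(w)=\frac1{2\eta}\norm{w+\eta S_{t-1}}^2-\text{const}\) with \(S_{t-1}=\sum_{s<t}Y_s\), we have \(w_t=P_W(-\eta S_{t-1})\), the metric projection onto \(W\). This gives existence and uniqueness. It also makes \(w_t\) a continuous function of \(Y_1,\dots,Y_{t-1}\), hence \(\mathcal F_{t-1}\)-measurable, which justifies the conditioning above.

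I then run the standard be-the-leader telescoping. The sum \(\sum_t[\Phi_{t+1}(w_{t+1})-\Phi_t(w_t)]\) telescopes. Using \(\Phi_{T+1}(w_{T+1})\le\Phi_{T+1}(u)\) and \(\Phi_1(w_1)\ge0\), I obtain
\[
\sum_t\ip{Y_t}{w_t-u}\le\frac{\norm u^2}{2\eta}+\sum_t\bigl[\Phi_{t+1}(w_t)-\Phi_{t+1}(w_{t+1})\bigr].
\]
Each bracket is controlled by the \(1/\eta\)-strong convexity of \(\Phi_t\) and \(\Phi_{t+1}\) together with the first-order optimality of their minimizers over the convex set \(W\). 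This gives
\[
\Phi_{t+1}(w_t)-\Phi_{t+1}(w_{t+1})\le\ip{Y_t}{w_t-w_{t+1}}-\frac{\norm{w_t-w_{t+1}}^2}{2\eta}\le\frac\eta2\norm{Y_t}^2 .
\]
The last step is Young's inequality.

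Integrability is routine. \(Y_t\) is bounded by \(\tau\), \(w_t-u\) is bounded by \(D\), and \(g_t\in L^p\subset L^1\), so every expectation is finite. The factor \(K_p\) is deterministic, and its placement inside the expectation in \eqref{eq:app-ftrl} is harmless.

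I expect the main obstacle to be obtaining the constant \(\eta/2\) rather than \(\eta\) in the stability step. The cruder route, nonexpansiveness \(\norm{w_t-w_{t+1}}\le\eta\norm{Y_t}\) followed by Cauchy--Schwarz, loses a factor \(2\). That loss would change the effective weight to \(\beta=\eta\tau\) and hence give the wrong argument of \(K_p\). So I would carefully write both strong-convexity inequalities, \(\Phi_t(w_{t+1})\ge\Phi_t(w_t)+\norm{w_{t+1}-w_t}^2/(2\eta)\) and the analogous one for \(\Phi_{t+1}\) at \(w_t\). Adding them yields \(\ip{Y_t}{w_t-w_{t+1}}\ge\norm{w_t-w_{t+1}}^2/\eta\). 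That inequality is what keeps the quadratic term in the bracket bound, so that Young's inequality closes it with the sharp \(\eta/2\).
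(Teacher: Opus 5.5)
Your argument is correct and reaches the same constant, but your decomposition differs from the paper's. The paper conditions first. Using predictability of \(w_t\), it rewrites \(\E\ip{g_t-Y_t}{w_t-u}=\E\ip{m_t-b_t}{w_t-u}\le\E[D\norm{m_t-b_t}]\), so the per-round charge is the conditional clipping bias plus raw energy. Only then does it apply Theorem~\ref{thm:deterministic} conditionally, which implicitly passes back through \(\norm{m_t-b_t}\le\E[\norm{g_t-Y_t}\mid\mathcal F_{t-1}]\).

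You instead bound the residual pointwise by Cauchy--Schwarz and apply the theorem pathwise. This gives the almost-sure inequality
\[
R_T(u)\le\frac{\norm{u}^2}{2\eta}+K_p\!\left(D,\frac{\eta\tau}{2}\right)\tau^{1-p}\sum_{t=1}^T\norm{g_t}^p ,
\]
and the claim follows after taking expectations, using only \(\E\norm{g_t}^p=\E M_t\).

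\emph{What each route buys.} Your route is more elementary and yields a stronger, pathwise statement. It needs only measurability of \(w_t\), so that \(R_T(u)\) is a random variable, and not predictability; the conditioning you mention is never actually used. The paper's route gives a sharper intermediate bound before the envelope is invoked: \(\norm{m_t-b_t}\) vanishes for conditionally symmetric gradients even when \(\E[\norm{g_t-Y_t}\mid\mathcal F_{t-1}]>0\). It also displays the charge as exactly the bias--energy pair the paper studies. After the envelope is applied, the two constants coincide. You also prove the quadratic FTRL inequality that the paper only asserts.

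\emph{One clarification on the stability step.} The bracket bound \(\Phi_{t+1}(w_t)-\Phi_{t+1}(w_{t+1})\le\ip{Y_t}{w_t-w_{t+1}}-\norm{w_t-w_{t+1}}^2/(2\eta)\) follows from two facts alone: the single inequality \(\Phi_t(w_{t+1})\ge\Phi_t(w_t)+\norm{w_{t+1}-w_t}^2/(2\eta)\), and the identity \(\Phi_{t+1}=\Phi_t+\ip{Y_t}{\cdot}\). The summed inequality \(\ip{Y_t}{w_t-w_{t+1}}\ge\norm{w_t-w_{t+1}}^2/\eta\) is true but plays no role there. Your displayed chain is still valid and gives the sharp \(\eta/2\).
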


\begin{proof}
Write
\(m_t=\E[g_t\mid\mathcal F_{t-1}]\) and
\(b_t=\E[Y_t\mid\mathcal F_{t-1}]\).  The minimizer satisfies
\(w_t=\Pi_W(-\eta\sum_{s<t}Y_s)\), so it is
\(\mathcal F_{t-1}\)-measurable and hence predictable.  The quadratic FTRL
inequality holds pathwise:
\[
\sum_{t=1}^T\ip{Y_t}{w_t-u}
\le
\frac{\norm{u}^2}{2\eta}
+\frac\eta2\sum_{t=1}^T\norm{Y_t}^2.
\]
Consequently,
\[
\E\ip{g_t-Y_t}{w_t-u}
=\E\ip{m_t-b_t}{w_t-u}
\le\E\!\left[D\norm{m_t-b_t}\right].
\]
Thus, before the outer expectation is taken, the conditional charge at round
\(t\) is at most
\[
D\norm{m_t-b_t}
+\frac\eta2\E[\norm{Y_t}^2\mid\mathcal F_{t-1}].
\]
Applying the raw-energy form of Theorem~\ref{thm:deterministic} conditionally,
with
\((\alpha,\beta)=(D,\eta\tau/2)\), bounds this expression by
\[
K_p\!\left(D,\frac{\eta\tau}{2}\right)
\tau^{1-p}M_t.
\]
Summing proves \eqref{eq:app-ftrl}.
\end{proof}

For the unit-ball direction learner of \citet{zhang2022parameterfree},
\(D=2\) and \(\eta\tau=1\).  Hence
\[
K_p(2,1/2)
=c_p\frac{2^{2p-1}}{3^{p-1}},
\qquad
K_2(2,1/2)=\frac23.
\]
Under the source assumptions
\(\E[\norm{g_t-m_t}^p\mid\mathcal F_{t-1}]\le\sigma^p\) and
\(\norm{m_t}\le G\), put
\(Q=2^{p-1}(\sigma^p+G^p)\), so that \(M_t\le Q\).  At \(p=2\), after
replacing \(M_t\) by the same upper bound \(Q\) on the raw moment and factoring out
\(\tau^{1-p}Q\), the joint coefficient is \(2/3\); the two sharp endpoints
used separately give \(1\), while the sum of the corresponding bounds in the
source proof gives \(5/2\).

This comparison concerns the expected local bias-plus-energy charge and
\eqref{eq:app-ftrl}.  It does not replace martingale or random-energy
concentration, the unbounded-domain reduction, or the high-probability
parameter-free theorem.  Nor does it imply that clipping is necessary for
heavy-tailed online optimization; see also \citet{liu2026oco}.

\section{Related work and conclusion}
\label{sec:scope}

The pure residual endpoint and its constant \(c_p\) are known ingredients in
truncation analyses; the pure energy endpoint is an immediate moment
comparison.  Existing clipping papers typically control bias and variance
separately or embed them in algorithm-specific convergence proofs
\citep{koloskova2023revisiting,nguyen2023improved,he2025tradeoff,liu2026refined}.
Generalized moment duality and two-point extremizers are also classical tools
\citep{bertsimas2005optimal,kleer2024operators}.  In the literature examined,
we did not find the explicit two-parameter function
\eqref{eq:constant}, its boundary \(\alpha=p\beta\), and the exact Hilbert
stochastic support theorem stated together.  This is a cautious search
observation, not a proof of worldwide priority.

At the quadratic boundary \(p=2\) and \(\alpha=2\beta\), the scalar numerator
is \(\beta r^2\) for \(r\le1\) and \(\beta(2r-1)\) for \(r>1\), namely a
scaled Huber loss \citep{huber1964robust}.  This reinforces that the proof
mechanism itself is classical; what is established here is the full
two-parameter support function, its exact centered stochastic value, the
attainment/nonattainment split, and explicit sharpness families.

Let us return to the point moving along a ray.  The whole result grows out of
one change at the clipping sphere: energy grows inside, residual grows
outside.  Comparing their prices produces the boundary
\(\alpha=p\beta\).  On one side, the extremizer stays on the sphere and a
symmetric law realizes stochastic equality.  On the other, the extremal
radius moves outside and equality becomes a limit of rare outliers.  The same
one-dimensional picture therefore determines the exact constant, the
attainment/nonattainment pattern, explicit sharpness families, and the
support-function frontier of the closure of attainable bias--energy pairs.

The result has natural limits.  Its deterministic part holds in any normed
space, but centered energy requires Hilbert geometry and the variance
identity.  We consider radial rather than coordinatewise clipping.  The
condition \(p\le2\) is essential when \(\beta>0\), since
\(\beta r^{2-p}\) diverges as \(r\downarrow0\) for \(p>2\).
The consequences above use the inequality as a local building block.  Within
these boundaries the picture is complete: the
constant, phase transition, extremal radii, sharpness mechanisms, and
attainment/nonattainment split are explicit.

\section{Reproducibility and author responsibility}

The principal deterministic, stochastic, conditional, sharpness,
nonattainment, and support-function statements were formalized in
Lean~4/mathlib.  The accompanying reproducibility archive identifies the
exact article, formalization, and packaging inputs through a release manifest
and SHA-256 checksums.  A clean-checkout verification compiles the theorem
sources and completes \texttt{lake build}.  For the checked declarations,
\texttt{\#print axioms} reports only \texttt{propext},
\texttt{Classical.choice}, and \texttt{Quot.sound}; no \texttt{sorryAx} is
present.  The formalization is a reproducibility and consistency check, not
independent peer review.

\smallskip
\noindent\textit{Author responsibility.} In preparing this article, I used
generative AI tools as auxiliary technical aids: for discussion and error
detection, assistance with encoding the arguments in Lean~4, language editing
and synchronization of the Russian and English versions, \LaTeX{} source
preparation, computational checks, and reproducible packaging.  I developed
the research question, mathematical ideas, proof strategy and principal proof
steps, final theorem statements, and their interpretation.  I am the sole
author of this article and assume full responsibility for all mathematical
claims, citations, and conclusions.  The AI tools used are not authors or
reviewers.

\begingroup
\small
\hbadness=2500
\setlength{\bibsep}{1pt}

\endgroup

\clearpage
\selectlanguage{russian}
\setcounter{section}{0}
\setcounter{subsection}{0}
\setcounter{equation}{0}
\setcounter{figure}{0}
\setcounter{table}{0}
\color{ink}
\makerussiantitle
\vspace{-1.2em}

\begin{abstract}
Радиальный клиппинг одновременно делает две вещи: отсекает часть вектора за
пределами шара и оставляет внутри шара ограниченный вектор. Первая часть
создает смещение, а квадрат нормы второй части определяет энергию. В работе
мы выясняем, какова точная цена этих двух эффектов, если в распоряжении есть
только \(p\)-й момент, \(1<p\le2\). Оказывается, вся задача сводится к
положению одной точки на луче. На границе \(\alpha=p\beta\) меняется
экстремальная конфигурация: до нее максимизатор можно выбрать на сфере
клиппинга, а после нее единственный положительный экстремальный радиус уходит
за сферу. При \(p=2\) в первом режиме равенство дают все внутренние радиусы.
Полученная константа является
наименьшей возможной как для детерминированного неравенства, так и для
соответствующей стохастической задачи в гильбертовом пространстве. В первом
режиме точная константа достигается на симметричном законе. Во втором режиме
супремум не достигается, однако к нему приближается явное семейство
двухточечных законов со все более редким выбросом. Мы также показываем, как тот
же переход определяет точную границу на плоскости смещение--энергия. В
заключение приведены статистические следствия и приложение к онлайн-обучению.
\end{abstract}

\medskip
\noindent\textbf{Ключевые слова.} Радиальный клиппинг; тяжелые хвосты; точные
неравенства; смещение; дисперсия; опорные функции.

\noindent\textbf{Математическая классификация 2020.}
60E15 (основная); 46N30 (дополнительная).

\section{Введение}

Представим вектор \(x\), который движется от начала координат вдоль
фиксированного луча. Пока \(\norm{x}\le\tau\), клиппинг ничего не меняет:
остаток равен нулю, но энергия растет вместе с \(\norm{x}^2\). Когда точка
пересекает сферу радиуса \(\tau\), картина меняется. Клиппированный вектор
останавливается на сфере, его энергия больше не растет, зато весь дальнейший
путь точки превращается в остаток \(x-C_\tau(x)\). Именно это простое
наблюдение лежит в основе статьи.

Такая конструкция постоянно возникает в задачах с тяжелыми хвостами.
Радиальный клиппинг превращает неограниченное стохастическое обновление в
ограниченное, но за это приходится платить двумя способами. Удаленная часть
дает смещение порядка \(\tau^{1-p}\E\norm{X}^p\), а оставшаяся часть ---
дисперсию, или энергию, порядка
\(\tau^{2-p}\E\norm{X}^p\). Обычно эти две цены оценивают отдельно. Этого
достаточно для оценки порядка сходимости, но точный курс обмена между
смещением и энергией при этом исчезает.

Поэтому мы ставим следующий локальный вопрос:
\begin{quote}
\emph{Каков наименьший платеж одним \(p\)-м моментом, который одновременно
оплачивает произвольную неотрицательную линейную комбинацию остатка
клиппинга и квадратичной энергии?}
\end{quote}
Ответ определяется отношением двух цен \(\alpha\) и \(\beta\). Если энергия
достаточно дорога, худшая точка остается на сфере клиппинга. Если дороже
становится остаток, экстремальная точка уходит наружу и останавливается на
явно вычисляемом радиусе \(r_*\tau\). Равенство
\(\alpha=p\beta\) отделяет эти две картины и играет роль фазовой границы.

Доказательство будет устроено в том же порядке, в каком возникает эта
интуиция. Сначала мы сведем задачу к движению по одному лучу и найдем точную
детерминированную константу. Затем перенесем неравенство на случайные
векторы, разберем достижение в первом режиме и предельную точность во втором.
После этого изобразим результат как опорную границу множества возможных пар
смещение--энергия.
Наконец, применим ту же огибающую к равномерной по времени оценке среднего,
минибатчу с опорной точкой и оценке ожидаемого регрета для квадратичного
алгоритма следования за регуляризованным лидером (FTRL).

Пороговые оценки среднего, основанные на усечении нормы, представлены,
например, в работе~\citep{catoni2018dimensionRU}; неасимптотические оценки усечения при
\(p\)-м моменте в гладких банаховых пространствах развиты в
работе~\citep{whitehouse2026banachRU}. Смещение и сходимость методов с
клиппингом исследуются в работах
\citep{koloskova2023revisitingRU,nguyen2023improvedRU}, а недавние работы
\citep{he2025tradeoffRU,liu2026refinedRU} содержат анализ компромисса
смещение--дисперсия и уточненные оценки ошибки клиппинга. Общая теория точных
моментных задач существенно шире; см.~\citep{bertsimas2005optimalRU} и явные
двухточечные редукции в работе~\citep{kleer2024operatorsRU}. Точное
разграничение приведено в разделе~\ref{sec:scope-ru}. Утверждение о мировом
приоритете не делается.

\section{Точная детерминированная огибающая}

Пусть \(E\) --- ненулевое вещественное нормированное пространство. Для
\(\tau>0\) положим
\[
C_\tau(x)=
\begin{cases}
x,&\norm{x}\le\tau,\\
\tau x/\norm{x},&\norm{x}>\tau.
\end{cases}
\]
Для \(1<p\le2\) обозначим
\[
c_p=\frac{(p-1)^{p-1}}{p^p}.
\]

Перед формулировкой теоремы полезно еще раз посмотреть на геометрию.
Оператор \(C_\tau\) сохраняет направление \(x\), поэтому ни размерность
пространства, ни форма его единичной сферы не участвуют в оптимизации.
Существенно только безразмерное расстояние
\[
r=\frac{\norm{x}}{\tau}.
\]
При \(r\le1\) существует только энергетическая цена, а при \(r>1\) к ней
добавляется линейно растущий остаток. Таким образом, будущая
двухпараметрическая теорема на самом деле описывает максимум одной функции
одной переменной.

\begin{theoremru}[Точная совместная огибающая]\label{thm:deterministic-ru}
Для \(\alpha,\beta\ge0\) определим
\begin{equation}\label{eq:constant-ru}
K_p(\alpha,\beta)=
\begin{cases}
\beta,&\alpha\le p\beta,\\[2mm]
c_p\dfrac{\alpha^p}{(\alpha-\beta)^{p-1}},
&\alpha>p\beta.
\end{cases}
\end{equation}
Тогда для любого \(x\in E\)
\begin{equation}\label{eq:det-envelope-ru}
\boxed{
\alpha\norm{x-C_\tau(x)}
+\frac{\beta}{\tau}\norm{C_\tau(x)}^2
\le
K_p(\alpha,\beta)\tau^{1-p}\norm{x}^p.}
\end{equation}
Константа в формуле~\eqref{eq:constant-ru} является наименьшей возможной
равномерной константой в любом ненулевом нормированном пространстве.
\end{theoremru}

\begin{proof}
\emph{Шаг 1: сведение к лучу.}
При \(x=0\) утверждение очевидно. Пусть теперь \(x\ne0\). Положим
\(r=\norm{x}/\tau\) и разделим~\eqref{eq:det-envelope-ru} на \(\tau\).
Тогда задача о наилучшей константе принимает вид
\begin{equation}\label{eq:scalar-ratio-ru}
\sup_{r>0}F(r),\qquad
F(r)=
\frac{\alpha(r-1)_++\beta\min\{r,1\}^2}{r^p}.
\end{equation}
Если \((\alpha,\beta)=(0,0)\), то \(F\equiv0\), поэтому далее считаем, что
хотя бы один вес положителен.

\emph{Шаг 2: точка находится внутри шара.}
При \(0<r\le1\) остатка еще нет, и потому
\[
F(r)=\beta r^{2-p}\le\beta.
\]

\emph{Шаг 3: точка вышла за границу.}
При \(r>1\) клиппированный вектор уже лежит на сфере, а остаток растет
линейно. Поэтому
\[
F(r)=\frac{\alpha(r-1)+\beta}{r^p},
\]
а знак \(F'(r)\) совпадает со знаком выражения
\begin{equation}\label{eq:derivative-sign-ru}
p(\alpha-\beta)-(p-1)\alpha r.
\end{equation}
Если \(\alpha\le p\beta\), производная неположительна уже в момент выхода из
шара, то есть при \(r=1\). Выражение~\eqref{eq:derivative-sign-ru} строго
убывает при \(\alpha>0\), а при \(\alpha=0\) равно отрицательной константе
\(-p\beta\). Следовательно, уходить за сферу невыгодно и глобальный максимум
равен \(\beta\).

Если же \(\alpha>p\beta\), сразу после границы функция продолжает расти.
Рост прекращается в единственной критической точке
\begin{equation}\label{eq:r-star-ru}
r_*=\frac{p(\alpha-\beta)}{\alpha(p-1)}>1.
\end{equation}
Подстановка дает
\[
F(r_*)=
\frac{(p-1)^{p-1}}{p^p}
\frac{\alpha^p}{(\alpha-\beta)^{p-1}}.
\]
\emph{Шаг 4: сравнение двух картин.}
Полученное значение совпадает с формулой~\eqref{eq:constant-ru}. Тем самым
мы одновременно доказали неравенство и нашли наименьшую возможную
константу. При \(\alpha=p\beta>0\) экстремальная точка как раз возвращается
на границу \(r_*=1\), поэтому две ветви совпадают. В начале координат первая
ветвь равна нулю, а вторая имеет непрерывное продолжение, также равное нулю.
\end{proof}

Итак, фазовый переход имеет простой смысл. В первом режиме худшее
наблюдение доходит до сферы и останавливается. Во втором режиме цена остатка
настолько велика, что худшему наблюдению выгодно выйти наружу; расстояние
\(r_*\tau\) показывает, насколько далеко оно должно уйти.

\begin{corollaryru}[Однопараметрическая и квадратичная формы]
\label{cor:one-param-ru}
При \(\alpha=1\), \(\beta=\lambda\ge0\) точная константа равна
\[
\kappa_p(\lambda)=
\begin{cases}
c_p(1-\lambda)^{1-p},&0\le\lambda\le1/p,\\
\lambda,&\lambda\ge1/p.
\end{cases}
\]
\par\noindent В частности,
\[
\kappa_2(\lambda)=
\begin{cases}
\dfrac{1}{4(1-\lambda)},&0\le\lambda\le1/2,\\[2mm]
\lambda,&\lambda\ge1/2.
\end{cases}
\]
\end{corollaryru}

При \((\alpha,\beta)=(0,0)\) максимизирует любой \(r>0\). Для ненулевых
весов в первом режиме при \(1<p<2\) единственный положительный
максимизирующий радиус равен \(r=1\), а при \(p=2\) максимизируют в точности
радиусы \(0<r\le1\). Во втором режиме \(r_*\) является единственным положительным
максимизирующим радиусом.

\section{Точная стохастическая граница}

Теперь вместо одной точки рассмотрим случайное облако точек. Клиппинг
сдвигает его центр: величина \(\norm{\E C_\tau(X)-\E X}\) измеряет смещение.
Одновременно облако сжимается внутрь шара, а
\(\E\norm{C_\tau(X)-\E C_\tau(X)}^2\) измеряет оставшуюся вокруг нового
центра энергию. Детерминированная теорема применяется к каждой точке облака,
но точность стохастического результата зависит уже от того, как эти точки
расположены относительно друг друга.

В этом разделе \(L^p(\Omega;H)\) означает пространство Бохнера, а все
\(H\)-значные ожидания и условные ожидания понимаются в смысле Бохнера.

\begin{theoremru}[Стохастическая огибающая и точная центрированная константа]
\label{thm:stochastic-ru}
Пусть \(H\) --- ненулевое вещественное гильбертово пространство,
\(1<p\le2\), \(\tau>0\), \(X\in L^p(\Omega;H)\), и
\[
Y=C_\tau(X),\qquad m=\E X,\qquad b=\E Y.
\]
Тогда для любых детерминированных \(\alpha,\beta\ge0\)
\begin{equation}\label{eq:stochastic-upper-ru}
\boxed{
\alpha\norm{b-m}
+\frac{\beta}{\tau}\E\norm{Y-b}^2
\le
K_p(\alpha,\beta)\tau^{1-p}\E\norm{X}^p.}
\end{equation}
Более того, если супремум берется по всем допустимым сильно измеримым
\(H\)-значным случайным векторам на произвольных вероятностных пространствах
(эквивалентно, по их борелевским законам, сосредоточенным на сепарабельных
подпространствах), константа остается точной:
\begin{equation}\label{eq:stochastic-sup-ru}
\boxed{
\sup_{\substack{\E X=0\\0<\E\norm{X}^p<\infty}}
\frac{
\alpha\norm{\E C_\tau(X)}
+\beta\tau^{-1}
\E\norm{C_\tau(X)-\E C_\tau(X)}^2
}{
\tau^{1-p}\E\norm{X}^p
}
=K_p(\alpha,\beta).}
\end{equation}
Для конструкций точности достаточно конечных вероятностных пространств с
двумя исходами. Точность на одном заранее фиксированном произвольном
пространстве не утверждается.
Если \(\alpha>p\beta\), супремум в~\eqref{eq:stochastic-sup-ru} не
достигается центрированным законом с положительным \(p\)-м моментом.
\end{theoremru}

\begin{proof}
\emph{Верхняя оценка.}
Неравенство Йенсена говорит, что сдвиг центра не больше среднего
перемещения отдельных точек. Гильбертово тождество дисперсии говорит, что
центрирование может только уменьшить квадратичную энергию. Формально,
\[
\norm{b-m}=\norm{\E(Y-X)}
\le\E\norm{Y-X},
\]
\[
\E\norm{Y-b}^2
=\E\norm{Y}^2-\norm{b}^2
\le\E\norm{Y}^2.
\]
Применяя теорему~\ref{thm:deterministic-ru} поточечно и переходя к
ожиданиям, получаем~\eqref{eq:stochastic-upper-ru}.

\emph{Первый режим: симметрия на сфере.}
Пусть \(\alpha\le p\beta\). Для единичного вектора \(e\) расположим две
точки \(\tau e\) и \(-\tau e\) с равными вероятностями. Облако симметрично,
клиппинг его не меняет, а вся цена сосредоточена в энергии. Поэтому
отношение в~\eqref{eq:stochastic-sup-ru} равно
\(\beta=K_p(\alpha,\beta)\), и верхняя оценка достигается.

\emph{Второй режим: редкий выброс.}
Пусть теперь \(\alpha>p\beta\), а \(r=r_*\) задан
формулой~\eqref{eq:r-star-ru}. Хотелось бы поместить всю массу на
экстремальном радиусе \(r\tau\), но такой закон нельзя одновременно сделать
центрированным и смещенным после клиппинга. Поэтому возьмем редкий большой
выброс и уравновесим его частой маленькой точкой. Для
\(0<q<(1+r)^{-1}\) определим
\[
X_q=
\begin{cases}
r\tau e,&\text{с вероятностью }q,\\[1mm]
-\dfrac{qr\tau}{1-q}e,&\text{с вероятностью }1-q.
\end{cases}
\]
Большой положительный атом клиппируется, тогда как отрицательный атом
остается внутри шара. До клиппинга они в точности уравновешивают друг друга,
а после клиппинга центр сдвигается. Прямое вычисление дает
\begin{equation}\label{eq:Rq-ru}
R_q=
\frac{
\alpha(r-1)
+\beta\dfrac{(1+q(r-1))^2}{1-q}
}{
r^p\left[
1+\left(\dfrac{q}{1-q}\right)^{p-1}
\right]
}.
\end{equation}
Так как \(p>1\), при \(q\downarrow0\) цена компенсирующего атома исчезает на
масштабе \(p\)-го момента, и
\(R_q\to F(r_*)=K_p(\alpha,\beta)\). Значит, константа точна и во втором
режиме.

\emph{Почему предела нельзя достичь.}
Предположим, что некоторый центрированный закон с положительным \(p\)-м
моментом достигает равенства во втором режиме. В частности, разность между
поточечными правой и левой частями теоремы~\ref{thm:deterministic-ru} является
неотрицательной интегрируемой случайной величиной. Равенство ожиданий
заставляет эту разность обращаться в нуль почти наверное; поэтому каждое
ненулевое наблюдение
имело бы норму \(r_*\tau\). На этой сфере \(C_\tau(X)=X/r_*\), и
центрированность дает \(\E C_\tau(X)=0\). Следовательно, нормированная
целевая функция не превосходит \(\beta/r_*^p\), тогда как
\[
K_p(\alpha,\beta)
=\frac{\alpha(r_*-1)+\beta}{r_*^p}
>\frac{\beta}{r_*^p}.
\]
Добавление массы в нуле не меняет противоречия. Таким образом, семейство
редких выбросов дает явный механизм точности, но ни один центрированный
закон с положительным \(p\)-м моментом не достигает предела.
\end{proof}

\begin{corollaryru}[Условная форма]\label{cor:conditional-ru}
Пусть \(H\) --- ненулевое вещественное гильбертово пространство,
\(1<p\le2\), \(X\in L^p(\Omega;H)\), а \(\G\) --- под-\(\sigma\)-алгебра.
Пусть \(\alpha,\beta\ge0\) детерминированы, а
\(\tau:\Omega\to(0,\infty)\) конечен почти наверное и
\(\G\)-измерим. Если
\[
m=\E[X\mid\G],\quad Y=C_\tau(X),\quad b=\E[Y\mid\G],
\]
то почти наверное
\[
\alpha\norm{b-m}
+\frac{\beta}{\tau}\E[\norm{Y-b}^2\mid\G]
\le
K_p(\alpha,\beta)\tau^{1-p}
\E[\norm{X}^p\mid\G].
\]
\end{corollaryru}

\begin{proof}
Все скалярные условные ожидания неотрицательных случайных величин ниже
сначала понимаются в расширенном смысле.
Условное неравенство Йенсена и условное гильбертово тождество дисперсии дают
\[
\norm{b-m}\le \E[\norm{Y-X}\mid\G],\qquad
\E[\norm{Y-b}^2\mid\G]
=\E[\norm{Y}^2\mid\G]-\norm{b}^2
\le \E[\norm{Y}^2\mid\G].
\]
Положим
\[
S=\E[\norm{Y}^2\mid\G]
\le\tau^{2-p}\E[\norm{X}^p\mid\G]<\infty
\quad\text{почти наверное}.
\]
Следовательно, все члены предыдущей формулы конечны почти наверное.
Остается применить
теорему~\ref{thm:deterministic-ru} поточечно со случайным порогом и взять
условные ожидания; все степени \(\tau\) являются \(\G\)-измеримыми.
Глобальная квадратичная интегрируемость не нужна: на множествах
\(A_n=\{S\le n\}\in\G\) вектор \(\mathbf 1_{A_n}Y\) принадлежит \(L^2\), а
его условное среднее равно \(\mathbf 1_{A_n}b\). Обычное условное тождество
дисперсии на \(A_n\) и затем переход \(A_n\uparrow\Omega\) почти наверное
доказывают выписанное равенство.
\end{proof}

\section{Выпуклая геометрия и интерпретация}
\label{sec:geometry-ru}

До этого момента веса \(\alpha\) и \(\beta\) выглядели как два внешних
параметра. Теперь дадим им геометрический смысл. Каждому допустимому
центрированному закону с \(0<\E\norm{X}^p<\infty\) сопоставим точку на
плоскости: по горизонтали отложим его нормированную энергию, а по вертикали
--- нормированное смещение:
\[
\Bias(X)=
\frac{\tau^{p-1}\norm{\E C_\tau(X)}}{\E\norm{X}^p},
\qquad
\Energy(X)=
\frac{\tau^{p-2}\E\norm{C_\tau(X)-\E C_\tau(X)}^2}
{\E\norm{X}^p}.
\]
Теорема~\ref{thm:stochastic-ru} утверждает, что для
\(\alpha,\beta\ge0\)
\[
\sup_X\{\alpha\Bias(X)+\beta\Energy(X)\}
=K_p(\alpha,\beta).
\]
При \((\alpha,\beta)\ne(0,0)\) левая часть --- значение линейной функции с
нормалью \((\beta,\alpha)\). Следовательно, \(K_p\) сообщает, как далеко можно
сдвинуть соответствующую опорную прямую, прежде чем она коснется замыкания
множества достижимых пар. Иными словами, \(K_p\) --- точная опорная функция
этого множества в неотрицательных направлениях.

Редкое двухточечное семейство позволяет увидеть эту границу непосредственно.
Когда радиус большого атома равен \(r\tau\), предельная точка имеет
параметризацию
\[
\Energy=r^{-p},\qquad
\Bias=(r-1)r^{-p},\qquad r\ge1.
\]
После исключения \(r\) получается вогнутая кривая
\begin{equation}\label{eq:pareto-curve-ru}
\Bias=\Energy^{(p-1)/p}-\Energy,\qquad 0<\Energy\le1.
\end{equation}
Парето-оптимальная дуга, экспонированная неотрицательными опорными
направлениями, соответствует
\(1\le r\le p/(p-1)\); точки с большими \(r\) доминируются. В замыкании
множества достижимых пар эта дуга содержит по крайней мере одну
максимизирующую точку для каждого ненулевого неотрицательного опорного
направления. Для нулевого направления опорное значение равно \(0\), а
максимизирует каждая достижимая пара. Мы не утверждаем, что кривая редких
выбросов совпадает со всем, вообще говоря невыпуклым, множеством достижимых
пар.

\begin{figure}[t]
\centering
\begin{tikzpicture}[x=6.3cm,y=12cm]
  \draw[->] (0,0) -- (1.08,0) node[right] {\(\Energy\)};
  \draw[->] (0,0) -- (0,0.30) node[above] {\(\Bias\)};
  \draw[thin,dashed,rulegray,domain=0.001:0.25,samples=60,smooth]
    plot (\x,{sqrt(\x)-\x});
  \draw[thick,domain=0.25:1,samples=90,smooth]
    plot (\x,{sqrt(\x)-\x});
  \fill (1,0) circle (0.8pt) node[below left] {\((1,0)\)};
  \fill (0.25,0.25) circle (0.8pt)
    node[above right] {\((1/4,1/4)\)};
  \draw[dashed,rulegray] (0.25,0) -- (0.25,0.25);
  \draw[dashed,rulegray] (0,0.25) -- (0.25,0.25);
\end{tikzpicture}
\caption{При \(p=2\) кривая редких выбросов имеет вид
\(\Bias=\sqrt{\Energy}-\Energy\). Дуга, выделенная утолщенной линией,
экспонирована в неотрицательных направлениях, а пунктирное продолжение
доминируется.
Точки \((1,0)\) и \((1/4,1/4)\) поддерживают направления, в которых доминирует
энергия, и направление чистого остатка.}
\label{fig:p2-frontier-ru}
\end{figure}

Три крайних случая дают основные специальные формы локальной огибающей:
\[
\begin{array}{ccl}
(\alpha,\beta)=(1,0)
&:&
\norm{x-C_\tau(x)}
\le c_p\tau^{1-p}\norm{x}^p,\\[1mm]
(\alpha,\beta)=(0,1)
&:&
\norm{C_\tau(x)}^2
\le\tau^{2-p}\norm{x}^p,\\[1mm]
(\alpha,\beta)=(1,1)
&:&
\norm{x-C_\tau(x)}
+\tau^{-1}\norm{C_\tau(x)}^2
\le\tau^{1-p}\norm{x}^p.
\end{array}
\]
На языке онлайн-портфелей остаток измеряет экспозицию, удаленную при
клиппинге тяжелохвостого градиента потерь, а квадратичное слагаемое является
локальной ценой кривизны, которую платит шаг зеркального спуска. Веса
\(\alpha,\beta\) задают относительные цены этих двух эффектов. Это
интерпретация огибающей, а не теорема об инвестиционной доходности.

\section{Приложения совместной огибающей}
\label{sec:applications-ru}

Геометрические цены \(\alpha,\beta\) приобретают конкретный смысл, когда
смещение клиппинга и оставшаяся энергия входят в доказательство с
фиксированными относительными весами.

\subsection{Равномерная по времени оценка среднего в гильбертовом пространстве}

В работе~\citet{whitehouse2024meanv2RU} рассматривается оценивание общего
среднего для банаховозначных наблюдений с тяжелыми хвостами и мартингальной
зависимостью. Конструкция использует начальную предварительную оценку,
относительно которой центрируются и радиально обрезаются последующие
наблюдения. Благодаря этому достаточно условного центрального \(p\)-момента, а
мартингальный аргумент дает не зависящие от размерности равномерные по времени
неравенства для вероятности пересечения линейной границы. В гильбертовом
пространстве доказательство
одновременно платит за смещение клиппинга и предсказуемую квадратичную энергию
ограниченных приращений. Это в точности две цены из нашей огибающей.

Более точно, для \(\rho>0\) обозначим
\[
h(\rho)=\frac{e^{2\rho}-2\rho-1}{(2\rho)^2}.
\]

\begin{theoremru}[Уточненная равномерная по времени оценка клиппированного среднего]
\label{thm:app-whitehouse-ru}
Пусть \(H\) --- сепарабельное вещественное гильбертово пространство,
\(1<p\le2\), \((\mathcal F_i)_{i\ge0}\) --- фильтрация, а
\((X_i)_{i\ge1}\) --- адаптированная к ней последовательность. Предположим,
что для некоторых детерминированных \(\mu\in H\) и \(v\in[0,\infty)\) почти
наверное
\[
\E[X_i\mid\mathcal F_{i-1}]=\mu,
\qquad
\E[\norm{X_i-\mu}^p\mid\mathcal F_{i-1}]\le v
\quad (i\ge1).
\]
Зафиксируем \(k\ge1\), \(\lambda,\rho>0\) и
\(\delta_1,\delta_2\in(0,1)\), где \(\delta_1+\delta_2<1\). Пусть
\(\mathcal F_k\)-измеримая предварительная оценка \(\widehat Z_k\)
удовлетворяет условию
\[
\mathbb P\!\left(
  \norm{\widehat Z_k-\mu}\ge r(\delta_2,k)
\right)\le\delta_2
\]
с детерминированным радиусом \(r(\delta_2,k)\). Для \(n>k\) положим
\[
\widehat\mu_n
=\widehat Z_k+
\frac1{n-k}\sum_{i=k+1}^n
C_{1/\lambda}(X_i-\widehat Z_k).
\]
Тогда на одном событии вероятности не меньше
\(1-\delta_1-\delta_2\) одновременно для всех \(n>k\) выполнено
\begin{equation}\label{eq:app-whitehouse-bound-ru}
\norm{\widehat\mu_n-\mu}
\le
2^{p-1}\kappa_p\!\bigl(\rho h(\rho)\bigr)
\lambda^{p-1}
\bigl(v+r(\delta_2,k)^p\bigr)
+\frac{\log(2/\delta_1)}{\rho\lambda(n-k)}.
\end{equation}
\end{theoremru}

\begin{proof}
Для \(i>k\) возьмем условное ожидание относительно
\(\mathcal F_{i-1}\) и положим
\[
U_i=X_i-\widehat Z_k,\qquad T_i=C_{1/\lambda}(U_i).
\]
Положим \(m_i=\mu-\widehat Z_k\) и определим
\[
B_i=\norm{\E[T_i\mid\mathcal F_{i-1}]
          -m_i},
\quad
V_i=\E\!\left[
\norm{T_i-\E[T_i\mid\mathcal F_{i-1}]}^2
\mathrel{\Big|}\mathcal F_{i-1}\right].
\]
Предсказуемый платеж за один шаг в гильбертовом доказательстве для
пересечения линейного порога имеет вид
\[
\rho\lambda B_i+\rho^2h(\rho)\lambda^2V_i
=\rho\lambda\bigl(B_i+\rho h(\rho)\lambda V_i\bigr).
\]
Моментное условие на предварительную оценку не требуется. Действительно, на
\(\mathcal F_k\)-измеримых множествах
\(A_m=\{\norm{\widehat Z_k-\mu}\le m\}\) вектор
\(\mathbf 1_{A_m}U_i\) принадлежит \(L^p\). Поскольку
\(A_m\in\mathcal F_{i-1}\),
\[
C_{1/\lambda}(\mathbf 1_{A_m}U_i)=\mathbf 1_{A_m}T_i,
\qquad
\E[\mathbf 1_{A_m}U_i\mid\mathcal F_{i-1}]
=\mathbf 1_{A_m}m_i,
\qquad
\E[\mathbf 1_{A_m}T_i\mid\mathcal F_{i-1}]
=\mathbf 1_{A_m}\E[T_i\mid\mathcal F_{i-1}].
\]
Локализованные смещение и центрированная энергия равны соответственно
\(\mathbf 1_{A_m}B_i\) и \(\mathbf 1_{A_m}V_i\). Применим
следствие~\ref{cor:conditional-ru} и используем \(A_m\uparrow\Omega\).
С порогом \(1/\lambda\) и весами \((1,\rho h(\rho))\) почти наверное получаем
\[
B_i+\rho h(\rho)\lambda V_i
\le
\kappa_p\!\bigl(\rho h(\rho)\bigr)\lambda^{p-1}
\E[\norm{U_i}^p\mid\mathcal F_{i-1}].
\]
Из предположения о центральном моменте следует
\[
\E[\norm{U_i}^p\mid\mathcal F_{i-1}]
\le2^{p-1}\bigl(v+\norm{\widehat Z_k-\mu}^p\bigr).
\]
Следовательно,
\[
\rho\lambda B_i+\rho^2h(\rho)\lambda^2V_i
\le \rho\,2^{p-1}\kappa_p\!\bigl(\rho h(\rho)\bigr)\lambda^p
\bigl(v+\norm{\widehat Z_k-\mu}^p\bigr).
\]
Для полноты сохраним условную дисперсию в гильбертовом аргументе, лежащем в
основе предложения~3.3 из работы~\citet{whitehouse2024meanv2RU}. Для сдвинутой
фильтрации \(\mathcal G_j=\mathcal F_{k+j}\) полученный неотрицательный
супермартингал и неравенство Вилля дают с вероятностью ошибки не более
\(\delta_1\) одновременно для всех \(n>k\)
\[
\norm{\widehat\mu_n-\mu}
\le
\frac1{n-k}\sum_{i=k+1}^n
\bigl(B_i+\rho h(\rho)\lambda V_i\bigr)
+\frac{\log(2/\delta_1)}{\rho\lambda(n-k)}.
\]
Применим полученную выше одношаговую оценку. На событии точности
предварительной оценки \(\norm{\widehat Z_k-\mu}\) не превосходит
\(r(\delta_2,k)\); объединение с дополнением этого события
доказывает~\eqref{eq:app-whitehouse-bound-ru}.
\end{proof}

Предыдущую границу можно записать в тех же обозначениях. Специализируем
предложение~3.5 из работы~\citet{whitehouse2024meanv2RU} на гильбертовом
пространстве, сдвинем индексы на \(k\), отождествим использованный там оператор
обрезания с \(C_{1/\lambda}\) и оставим \(\rho\) свободным.
Однопараметрическая константа обрезания из этой работы равна
\[
\frac1p\left(\frac{p-1}{p}\right)^{p-1}=c_p.
\]
Поэтому раздельные оценки смещения и энергии дают с вероятностью не меньше
\(1-\delta_1-\delta_2\) одновременно для всех \(n>k\)
\begin{equation}\label{eq:app-whitehouse-separate-ru}
\norm{\widehat\mu_n-\mu}
\le
2^{p-1}\{c_p+\rho h(\rho)\}
\lambda^{p-1}
\bigl(v+r(\delta_2,k)^p\bigr)
+\frac{\log(2/\delta_1)}{\rho\lambda(n-k)}.
\end{equation}
Лемма~3.2 дает отдельную оценку смещения, предложение~3.3 ---
мартингально-энергетический шаг, а предложение~3.5 --- итоговое равномерное по
времени неравенство о пересечении линейной границы. Таким образом, единственное
изменение в теореме~\ref{thm:app-whitehouse-ru} состоит в замене
\[
c_p+\rho h(\rho)
\quad\longmapsto\quad
\kappa_p\!\bigl(\rho h(\rho)\bigr).
\]
Правая величина строго меньше при каждом \(\rho>0\). Общий множитель
\(2^{p-1}\) сохраняется: он возникает при
переходе от центрального момента относительно \(\mu\) к моменту относительно
предварительной оценки. Не меняются ни оцениватель, ни условная моментная
предпосылка, ни мартингальная зависимость, ни скорость, ни независимость
гильбертова результата от размерности.

Обе оценки имеют вид \(A\lambda^{p-1}+B/\lambda\) с одним и тем же \(B\).
После оптимизации по \(\lambda\) при фиксированном \(n\) полученный радиус
пропорционален \(A^{1/p}\). При \(\rho=1\)
\(h(1)=(e^2-3)/4\) и \(\kappa_p(h(1))=h(1)\), поэтому отношение уточненного
радиуса к радиусу из~\eqref{eq:app-whitehouse-separate-ru} равно
\[
\left(\frac{h(1)}{h(1)+c_p}\right)^{1/p}.
\]
При \(p=2\) оно составляет \(0.902463\ldots\), то есть радиус уменьшается
примерно на \(9.75\%\). В сопоставлении используются формулы из второй версии
препринта~\citep{whitehouse2024meanv2RU}; утверждение о формуле в более поздней
журнальной версии~\citep{whitehouse2026banachRU} не делается.

То же векторное событие одновременно контролирует ожидаемые доходности. Для
\(c\ge1\) рассмотрим введенный в
работе~\citet{fan2012vastRU} класс с ограниченной полной экспозицией
\[
\mathcal W_c=
\{w\in\R^d:\mathbf 1^Tw=1,\ \norm{w}_1\le c\}.
\]

\begin{corollaryru}[Проекция при ограниченной полной экспозиции]
\label{cor:app-gross-exposure-ru}
В условиях теоремы~\ref{thm:app-whitehouse-ru} при \(H=\R^d\) обозначим
правую часть~\eqref{eq:app-whitehouse-bound-ru} через \(\mathfrak r_n\).
На том же событии одновременно для всех \(n>k\)
\[
\sup_{w\in\mathcal W_c}
\left|w^T(\widehat\mu_n-\mu)\right|
\le c\norm{\widehat\mu_n-\mu}_2
\le c\mathfrak r_n.
\]
В частности, неравенство остается верным для любого зависящего от данных
выбора \(\widehat w_n\in\mathcal W_c\); объединять события по активам или
портфелям не требуется.
\end{corollaryru}

\begin{proof}
Для каждого \(w\in\mathcal W_c\)
\[
|w^Tz|\le\norm{w}_1\norm{z}_\infty
\le c\norm{z}_2.
\]
Остается применить это детерминированное неравенство к
\(z=\widehat\mu_n-\mu\) на событии из
теоремы~\ref{thm:app-whitehouse-ru}.
\end{proof}

Это утверждение дает лишь одновременный контроль ошибки в ожидаемых
доходностях; из него не следует оценка ковариации или качества портфеля.

\subsection{Клиппинг минибатча относительно опорной точки}

\begin{corollaryru}[Точная огибающая для минибатча с опорной точкой]
\label{cor:app-minibatch-ru}
Пусть \(H\) --- ненулевое вещественное гильбертово пространство,
\(1<p\le2\), \(n\ge1\), а \(X_1,\ldots,X_n\) --- независимые копии \(H\)-значного
случайного вектора со средним \(\mu\). Зафиксируем детерминированную опорную
точку \(a\in H\) и \(\tau>0\) и предположим, что
\[
M_a=\E\norm{X_1-a}^p<\infty.
\]
Положим
\[
\widehat\mu_{a,\tau}
=a+\frac1n\sum_{i=1}^n C_\tau(X_i-a).
\]
Тогда для любых \(\alpha,\beta\ge0\)
\begin{equation}\label{eq:app-minibatch-ru}
\alpha\norm{\E\widehat\mu_{a,\tau}-\mu}
+\frac\beta\tau
\E\norm{\widehat\mu_{a,\tau}
         -\E\widehat\mu_{a,\tau}}^2
\le
K_p\!\left(\alpha,\frac\beta n\right)
\tau^{1-p}M_a.
\end{equation}
Эта равномерная константа является наименьшей возможной на указанном классе,
а граница двух режимов имеет вид \(\alpha=p\beta/n\).
\end{corollaryru}

\begin{proof}
Положим \(Z_i=X_i-a\), \(Y_i=C_\tau(Z_i)\),
\(m=\mu-a\), \(b=\E Y_i\) и
\(V=\E\norm{Y_i-b}^2\). Тогда
\[
\norm{\E\widehat\mu_{a,\tau}-\mu}=\norm{b-m},
\qquad
\E\norm{\widehat\mu_{a,\tau}
         -\E\widehat\mu_{a,\tau}}^2=\frac Vn.
\]
Во втором равенстве использованы независимость центрированных слагаемых и
обращение в нуль перекрестных членов в гильбертовом пространстве. Применяя
теорему~\ref{thm:stochastic-ru} к \(Z_1\) с весами
\((\alpha,\beta/n)\), получаем~\eqref{eq:app-minibatch-ru}. Для проверки
точности положим \(a=\mu\) и возьмем независимые копии одномерных
центрированных законов из доказательства теоремы~\ref{thm:stochastic-ru}; во
втором режиме используется соответствующая приближающая последовательность.
Усреднение не меняет смещение клиппинга и делит центрированную энергию на
\(n\).
\end{proof}

Если \(a\) измерима относительно сигма-алгебры \(\mathcal A\), а условно по
\(\mathcal A\) выборка независима и одинаково распределена с условным средним
\(\mu\), положим \(M_a=\E[\norm{X_1-a}^p\mid\mathcal A]\). Если
\(M_a<\infty\) почти наверное, то же утверждение верно условно, и условная
цена смещения и дисперсии не превосходит
\(K_p(\alpha,\beta/n)\tau^{1-p}M_a\). Если дополнительно \(\E M_a<\infty\),
ее математическое ожидание не превосходит
\(K_p(\alpha,\beta/n)\tau^{1-p}\E M_a\). Построение \(a\) по тому же минибатчу
без отдельного учета зависимости не обосновывает тождество для дисперсии.

При \(H=\R^d\) точное сопоставление возможно с леммами о клиппинге из работ
\citet{sadiev2023highprobabilityRU,gorbunov2024highprobabilityRU}. Обозначим
\(\sigma^p=\E\norm{X_1-\mu}^p>0\). После замены переменной
\(Z=X_1-a\) оцениватели совпадают, а условие близости к центру принимает вид
\(\norm{\mu-a}\le\tau/2\). На этой общей области предпосылок сумма
приведенных там раздельных оценок ограничивает левую часть
\eqref{eq:app-minibatch-ru} величиной
\[
\left(2^p\alpha+\frac{18\beta}{n}\right)
\sigma^p\tau^{1-p}.
\]
При \(\alpha+\beta>0\) совместная оценка строго меньше, если
\[
\frac{M_a}{\sigma^p}
<
\frac{2^p\alpha+18\beta/n}
     {K_p(\alpha,\beta/n)}.
\]
Выбор \(a=\mu\) носит оракульный характер и позволяет сравнить одни и те же
величины. При \(p=2\), \(n=4\), \(\alpha=\beta=1\) коэффициент совместной
оценки равен \(1/3\), сумма двух точных оценок крайних случаев --- \(1/2\), а
сумма оценок из леммы~B.3 работы \citet{gorbunov2024highprobabilityRU} ---
\(8.5\).

Чтобы непосредственно применить предыдущий условный аргумент, практическую
предварительную оценку следует строить независимо или условно независимо от
минибатча; иначе требуется отдельный учет зависимости. В правой части остается
момент \(M_a\), а для этого сопоставления требуется выписанное условие на
расстояние до \(\mu\). Полные высоковероятностные теоремы сходимости
клиппированного стохастического градиентного спуска (SGD) содержат
дополнительные шаги и этим локальным вычислением не усиливаются.

\subsection{Ожидаемая оценка для клиппированного квадратичного FTRL}

\begin{corollaryru}[Ожидаемая оценка клиппированного FTRL на ограниченном множестве]
\label{cor:app-ftrl-ru}
Пусть \(H\) --- вещественное гильбертово пространство, а \(W\subset H\) ---
непустое замкнутое выпуклое множество диаметра \(D<\infty\). Зафиксируем
\(1<p\le2\), пусть \((\mathcal F_t)_{t\ge0}\) --- фильтрация, а
\((g_t)_{t\ge1}\) --- адаптированная к ней последовательность, где
\(g_t\in L^p(\Omega;H)\). Для
\(\eta,\tau>0\) положим
\[
Y_t=C_\tau(g_t),
\qquad
M_t=\E[\norm{g_t}^p\mid\mathcal F_{t-1}]<\infty
\]
и определим \(w_t\) как единственный минимизатор квадратичного FTRL:
\[
w_t\in\mathop{\arg\min}_{w\in W}
\left\{
  \sum_{s<t}\ip{Y_s}{w}+\frac{\norm{w}^2}{2\eta}
\right\}.
\]
Для \(T\ge1\) и \(u\in W\) обозначим
\(R_T(u)=\sum_{t=1}^T\ip{g_t}{w_t-u}\). Тогда
\begin{equation}\label{eq:app-ftrl-ru}
\E R_T(u)
\le
\frac{\norm{u}^2}{2\eta}
+\sum_{t=1}^T
\E\!\left[
K_p\!\left(D,\frac{\eta\tau}{2}\right)
\tau^{1-p}M_t
\right].
\end{equation}
\end{corollaryru}

\begin{proof}
Обозначим
\(m_t=\E[g_t\mid\mathcal F_{t-1}]\) и
\(b_t=\E[Y_t\mid\mathcal F_{t-1}]\). Минимизатор удовлетворяет равенству
\(w_t=\Pi_W(-\eta\sum_{s<t}Y_s)\), поэтому он
\(\mathcal F_{t-1}\)-измерим и, следовательно, предсказуем. Для каждой
траектории стандартное неравенство для квадратичного FTRL дает
\[
\sum_{t=1}^T\ip{Y_t}{w_t-u}
\le
\frac{\norm{u}^2}{2\eta}
+\frac\eta2\sum_{t=1}^T\norm{Y_t}^2.
\]
Следовательно,
\[
\E\ip{g_t-Y_t}{w_t-u}
=\E\ip{m_t-b_t}{w_t-u}
\le\E\!\left[D\norm{m_t-b_t}\right].
\]
Таким образом, до взятия внешнего ожидания условный платеж на шаге \(t\) не
превосходит
\[
D\norm{m_t-b_t}
+\frac\eta2\E[\norm{Y_t}^2\mid\mathcal F_{t-1}].
\]
Условное применение формы теоремы~\ref{thm:deterministic-ru} с
нецентрированной энергией и весами
\((\alpha,\beta)=(D,\eta\tau/2)\) показывает, что этот платеж не превосходит
\[
K_p\!\left(D,\frac{\eta\tau}{2}\right)
\tau^{1-p}M_t.
\]
Суммирование завершает доказательство~\eqref{eq:app-ftrl-ru}.
\end{proof}

Для алгоритма на единичном шаре из работы
\citet{zhang2022parameterfreeRU} имеем \(D=2\) и \(\eta\tau=1\). Поэтому
\[
K_p(2,1/2)
=c_p\frac{2^{2p-1}}{3^{p-1}},
\qquad
K_2(2,1/2)=\frac23.
\]
При предположениях источника
\(\E[\norm{g_t-m_t}^p\mid\mathcal F_{t-1}]\le\sigma^p\) и
\(\norm{m_t}\le G\) положим
\(Q=2^{p-1}(\sigma^p+G^p)\); тогда \(M_t\le Q\). При \(p=2\), после замены
\(M_t\) общей верхней оценкой \(Q\) для нецентрированного момента и вынесения
\(\tau^{1-p}Q\), коэффициент совместной оценки равен \(2/3\); раздельное
применение двух точных крайних случаев дает \(1\), а сумма соответствующих
оценок в исходном доказательстве дает \(5/2\).

Это сопоставление относится к ожидаемому локальному платежу за смещение и
энергию и к оценке~\eqref{eq:app-ftrl-ru}. Оно не заменяет концентрацию
мартингальной части или случайной энергии, редукцию для неограниченной
области и высоковероятностную беспараметрическую теорему. Из него также не
следует необходимость клиппинга в онлайн-оптимизации с тяжелыми хвостами;
см.~\citet{liu2026ocoRU}.

\section{Близкие работы и заключение}
\label{sec:scope-ru}

Крайний случай только с остатком и его константа \(c_p\) являются известными
элементами анализа усечения; крайний случай только с энергией является
непосредственным сравнением моментов. В существующих работах по клиппингу
смещение и дисперсия обычно оцениваются отдельно или встраиваются в
алгоритмические доказательства сходимости
\citep{koloskova2023revisitingRU,nguyen2023improvedRU,he2025tradeoffRU,liu2026refinedRU}.
Обобщенная моментная двойственность и двухточечные экстремизаторы также
являются классическими инструментами
\citep{bertsimas2005optimalRU,kleer2024operatorsRU}. В изученной литературе мы не
нашли совместной явной формулы~\eqref{eq:constant-ru}, границы
\(\alpha=p\beta\) и точной гильбертовой стохастической опорной теоремы,
сформулированных как единый результат. Это осторожный вывод по результатам поиска, а не
доказательство мирового приоритета.

На квадратичной границе \(p=2\), \(\alpha=2\beta\) скалярный числитель равен
\(\beta r^2\) при \(r\le1\) и \(\beta(2r-1)\) при \(r>1\), то есть является
масштабированной функцией потерь Хьюбера~\citep{huber1964robustRU}. Это еще раз
показывает классический характер самого механизма доказательства. В настоящей
работе установлены полная двухпараметрическая опорная функция, ее точное
центрированное стохастическое значение, разделение случаев достижения и
недостижения и явные семейства, показывающие точность.

Вернемся к картине точки, движущейся по лучу. Вся статья выросла из одного
изменения поведения на сфере клиппинга: внутри шара растет энергия, снаружи
растет остаток. Сравнение их цен приводит к границе
\(\alpha=p\beta\). По одну сторону этой границы экстремизатор остается на
сфере и стохастическое равенство реализуется симметричным законом. По другую
сторону экстремальный радиус выходит за сферу, а равенство превращается в
предел редких выбросов. Так одномерная картина определяет одновременно
точную константу, разделение случаев достижения и недостижения, явные
семейства, показывающие точность, и опорную границу замыкания множества
достижимых пар смещение--энергия.

\Needspace{9\baselineskip}
У результата есть естественные пределы. Детерминированная часть верна в
любом нормированном пространстве, но центрированная энергия требует
гильбертовой геометрии и тождества дисперсии. Мы рассматриваем радиальный, а
не покоординатный клиппинг. Ограничение \(p\le2\) существенно при
\(\beta>0\), поскольку при \(p>2\) величина
\(\beta r^{2-p}\) неограниченно растет при \(r\downarrow0\).
Приведенные выше следствия используют неравенство как локальный инструмент.
В этих границах
картина полна: известны точная константа, фазовый переход, экстремальные
радиусы, механизмы точности и разделение случаев достижения и недостижения.

\section{Воспроизводимость и ответственность автора}

Основные детерминированные, стохастические, условные утверждения, результаты
о точности и недостижении, а также утверждения об опорной функции
формализованы в Lean~4/mathlib. Сопроводительный архив воспроизводимости
фиксирует точные исходники статьи, формализации и упаковки через релизный
манифест и контрольные суммы SHA-256. Проверка в чистой рабочей копии
компилирует исходники с теоремами и успешно выполняет \texttt{lake build}.
Для проверяемых деклараций команда \texttt{\#print axioms} указывает только
\texttt{propext}, \texttt{Classical.choice} и \texttt{Quot.sound}; аксиома
\texttt{sorryAx} отсутствует. Формализация служит проверкой воспроизводимости
и согласованности, но не является независимым научным рецензированием.

\Needspace{12\baselineskip}
\smallskip
\noindent\textit{Ответственность автора.} При подготовке статьи я использовал
генеративные инструменты ИИ как вспомогательные технические средства: для
обсуждения и поиска ошибок, переноса аргументов в Lean~4, языковой редактуры и
синхронизации русской и английской версий, подготовки исходников в \LaTeX{},
вычислительных проверок и воспроизводимой упаковки материалов.
Исследовательский вопрос, математические идеи, стратегию и основные шаги
доказательства, окончательные формулировки теорем и их интерпретацию разработал
я. Я являюсь единственным автором статьи и несу полную ответственность за все
математические утверждения, библиографические ссылки и выводы. Использованные
инструменты ИИ не являются авторами или рецензентами.

\begingroup
\small
\hbadness=2500
\setlength{\bibsep}{1pt}

\endgroup

\end{document}